\let\epsilon=\varepsilon
\let\phi=\varphi
\let\emptyset=\varnothing
\newcommand{\R}{\mathbb{R}}
\newcommand{\N}{\mathbb{N}}
\newcommand{\sign}{\operatorname{sgn}}
\newcommand{\extr}{\operatorname{ext}}
\newcommand{\inter}{\operatorname{int}}
\newcommand{\Int}{\operatorname{int}}
\newcommand{\cl}{\operatorname{cl}}
\newcommand{\Log}{\operatorname{Log}}
\newcommand{\Exp}{\operatorname{Exp}}
\newcommand{\id}{\operatorname{id}}
\newcommand{\conv}{\operatorname{conv}}
\newcommand{\Fix}{\operatorname{Fix}}
\newcommand{\var}[1]{\| #1 \|_{\mathrm{v}}}
\numberwithin{equation}{section}
\theoremstyle{plain}
\newtheorem{theorem}{Theorem}[section]
\newtheorem{corollary}[theorem]{Corollary}
\newtheorem{lemma}[theorem]{Lemma}
\newtheorem{proposition}[theorem]{Proposition}
\theoremstyle{definition}
\newtheorem{example}[theorem]{Example}
\theoremstyle{remark}
\newtheorem{remark}[theorem]{Remark}
\begin{document}
\title[Detecting fixed points by illuminating the unit ball]{Detecting fixed points of nonexpansive maps by illuminating the unit ball}
\author[B. Lemmens, B. Lins, R. Nussbaum]{Bas Lemmens, Brian Lins$^*$, Roger Nussbaum$^\dagger$}
\date{}
\address{Brian Lins, Hampden-Sydney College}
\email{blins@hsc.edu}
\address{Bas Lemmens, University of Kent}
\email{B.Lemmens@kent.ac.uk}
\address{Roger Nussbaum, Rutgers University}
\email{nussbaum@math.rutgers.edu}
\thanks{$^*$Corresponding author.}
\subjclass[2010]{Primary 47H09, 47H10;  Secondary 37C25, 47H07, 47H11}
\keywords{Nonexpansive maps; fixed points; horofunction; topological degree; nonlinear Perron-Frobenius theory; eigenvectors; illumination number}
\thanks{$^\dagger$This work was partially supported by NSF grant DMS-1201328}

\begin{abstract} 
We give necessary and sufficient conditions for a nonexpansive map on a finite dimensional normed space to have a nonempty, bounded set of fixed points. Among other results we show that if $f : V \rightarrow V$ is a nonexpansive map on a finite dimensional normed space $V$, then the fixed point set of $f$ is nonempty and bounded if and only if there exist $w_1, \ldots , w_m$ in $V$ such that $\{f(w_i) - w_i : i = 1, \ldots, m \}$ illuminates the unit ball. This yields
a numerical procedure for detecting fixed points of nonexpansive maps on finite dimensional spaces. We also discuss applications of this procedure to certain nonlinear eigenvalue
problems arising in game theory and mathematical biology. 
\end{abstract}

\maketitle

\section{Introduction}\label{sec:intro}

A central problem in metric fixed point theory is to understand when a nonexpansive map $f:X \rightarrow X$ on a metric space $(X,d)$ has a fixed point.  There are numerous results when $X$ is a closed, bounded, convex subset in a Banach space $V$, see \cite{GoebelKirk,GoebelReich}.  Of course, if $V$ is finite dimensional and $X \subset V$ is compact and convex, then the Brouwer fixed point theorem immediately resolves the question.  If, however, $X$ is unbounded, it is not at all clear when $f$ has a fixed point, even if the normed space $V$ is finite dimensional.  

In this paper we study the fixed point set, $\Fix(f)$, of nonexpansive maps on finite dimensional normed spaces. For such maps, there are many algorithms known to approximate fixed points \cite{Ishikawa, Mann}, if one exists.  The results of this paper complement these algorithms by providing computational methods that can confirm the existence of fixed points.  The paper is organized as follows. 

In Section \ref{sec:horo} below we review the horofunction compactification of a complete, proper, metric space. Our Proposition \ref{prop:beardon} extends a result of Beardon \cite[Proposition 4.5]{Beardon97} and allows us to give necessary and sufficient conditions for the fixed point set of a nonexpansive map to be bounded and nonempty.

In Section \ref{sec:illum} we focus on the fixed point set of maps $f:V \rightarrow V$ when $V$ is a finite dimensional normed space.  Our main result, Theorem \ref{thm:main}, gives necessary and sufficient conditions for $\Fix(f)$ to be nonempty and bounded.  In particular, we show that $\Fix(f)$ is nonempty and bounded if and only if there exist a finite number of points $w_1 , \ldots , w_m$ in $V$ such that $S := \{f (w_i ) - w_i : i = 1, \ldots , m\}$
illuminates the closed unit ball $B_1$ of $V$. Recall that $S$ illuminates $B_1$ if for each $w \in \partial B_1$ there exists $s \in S$ such that $w + \lambda s \in \inter B_1$ for all $\lambda > 0$ sufficiently small. Interestingly, it is a famous unresolved conjecture whether every compact, convex body in an $n$-dimensional vector space $V$ can be illuminated by some subset $S$ of $V$ with cardinality less than or equal to $2^n$. 

Theorem \ref{thm:main} suggests the following simple procedure for detecting fixed points of nonexpansive
maps $f : V \rightarrow V$. Generate randomly a finite set $S$ in $V$ and check if $\{f (w) - w : w \in S\}$ illuminates the unit ball of $V$. In Section \ref{sec:norm} we discuss criteria that can be verified computationally to check if a set illuminates the unit ball for a variety of norms. In Section \ref{sec:PF}, we apply the results to certain nonlinear eigenvalue problems that arise in game theory \cite{BewleyKohlberg,RosenbergSorin} and mathematical biology \cite{Nussbaum89,Schoen86}, and perform some numerical experiments to test the feasibility of the procedure. Of particular interest is Theorem 5.1 which is a nonlinear Perron-Frobenius theorem.  The final section explains how illuminating sets can be used to place bounds on the location of the fixed point set of nonexpansive maps.

\section{Fixed points and horofunctions} \label{sec:horo}
Throughout the paper we will use the notation $\cl A$, $\inter A$ and $\partial A$ to, respectively, denote the closure, interior and boundary of a set $A$ in a metric space. We will also denote the closed ball with radius $r>0$ and center $x$ by $B_r(x)$. 

Let us briefly recall the horofunction compactification of a complete, proper, metric space $(X,d)$, see \cite{BridsonHaefliger,Gromov,Rieffel}. Here, \textit{proper} means that every closed ball of radius $R \ge 0$ in $X$ is compact. Let $C(X)$ denote the space of continuous functions $f\colon X\to \mathbb{R}$ equipped with the topology of compact convergence,  see \cite[\S 46]{Munkres}.  
Fix a {\em base point} $b\in X$ and define for $x\in X$ the function  $\tau_b(x)\colon X\to\mathbb{R}$ by 
\[
\tau_b(x)(y) := d(y,x)-d(b,x) \quad\text{for all } y\in X.
\]
It is easy to check that for each $x\in X$ the function $\tau_b(x)$ is Lipschitz with constant 1, and hence $\tau_b(X)\subseteq C(X)$ is a an equicontinuous family. Moreover, for each fixed $y\in X$ we have that $\{\tau_b(x)(y)\colon x\in X\}\subseteq [-d(y,b),d(y,b)]$. Thus, it follows from Ascoli's theorem \cite[Theorem 47.1]{Munkres} that $\tau_b(X)$ has compact closure in $C(X)$. 
The {\em horofunction boundary} of $(X,d)$ is given by $X(\infty):=\cl \tau_b(X) \setminus\tau_b(X)$, and its elements are called {\em horofunctions}. Given $h\in X(\infty)$ the set $H(h,r):=\{x\in X\colon h(x)\leq r\}$ is called the {\em horoball}  with center $h$ and radius $r\in\R$. As $(X,d)$ is proper, it is  $\sigma$-compact, i.e., $X$ is the union of countably many open  sets with compact closure, and hence the topology of compact convergence is metrizable, see \cite[Exercise 10, p. 289]{Munkres}. This implies that every sequence $\tau_b(x_n)$ in $C(X)$ has a convergent subsequence. 

If we furthermore assume that the  complete, proper, metric space $(X,d)$  is {\em geodesic}, that is to say, for each $x\neq y$ in $X$ there exists a path $\gamma\colon [\alpha,\beta]\to (X,d)$ such that $\gamma(\alpha)=x$, $\gamma(\beta) =y $ and $d(\gamma(s),\gamma(t))=|s-t|$ for all $\alpha\leq s\leq t\leq \beta$, then the horofunctions are precisely the limits of converging sequences $\tau_b(x_n)$ such that $d(b,x_n)\to\infty$. Indeed the following lemma holds, which is a slightly weaker result than \cite[Theorem 4.7]{Rieffel} by Rieffel, who showed that the horofunctions are precisely the limits of so called weakly geodesic rays. For completeness we include a proof.
\begin{lemma}\label{lem:2.1} 
If $(X,d)$ is a complete, proper, geodesic metric space, then $h\in X(\infty)$ if and only if there exists a sequence $(x_n)$ in $X$ with $d(b,x_n)\to \infty$ such that $\tau_b(x_n)$ converges to $h\in \cl \tau_b(X)$ as $n\to \infty$.
\end{lemma}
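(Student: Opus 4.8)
The plan is to prove the two implications directly; only the reverse one will need the geodesic hypothesis. For the forward implication, suppose $h\in X(\infty)=\cl\tau_b(X)\setminus\tau_b(X)$. Since $X$ is $\sigma$-compact, the topology of compact convergence on $C(X)$ is metrizable, so $h=\lim_n\tau_b(x_n)$ for some sequence $(x_n)$ in $X$, and the point is to check that $d(b,x_n)\to\infty$. I would argue by contradiction: if not, then after passing to a subsequence $d(b,x_{n_j})\le R$ for some $R\ge0$, so the $x_{n_j}$ lie in the compact ball $B_R(b)$ and, refining once more, $x_{n_j}\to x_0$ for some $x_0\in X$. Because $|\tau_b(x)(y)-\tau_b(x')(y)|\le 2\,d(x,x')$ for every $y$, the map $x\mapsto\tau_b(x)$ is continuous into $C(X)$, so $\tau_b(x_{n_j})\to\tau_b(x_0)$; by uniqueness of limits this gives $h=\tau_b(x_0)\in\tau_b(X)$, a contradiction. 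Hence $d(b,x_n)\to\infty$.

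For the reverse implication, suppose $\tau_b(x_n)\to h\in\cl\tau_b(X)$ with $d(b,x_n)\to\infty$, and suppose toward a contradiction that $h=\tau_b(x_0)$ for some $x_0\in X$. Fix any $R>d(b,x_0)$. Here the geodesic hypothesis enters: for each $n$ with $d(b,x_n)>R$, choose a unit-speed geodesic $\gamma_n\colon[0,d(b,x_n)]\to X$ from $b$ to $x_n$ and put $y_n:=\gamma_n(R)$, so that $d(b,y_n)=R$ and $d(y_n,x_n)=d(b,x_n)-R$, whence $\tau_b(x_n)(y_n)=-R$. The points $y_n$ all lie in the compact ball $B_R(b)$, so along a subsequence $y_{n_j}\to y^{*}$ with $d(b,y^{*})=R$. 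Since $\tau_b(x_{n_j})\to h$ uniformly on the compact set $B_R(b)$ and $h$ is $1$-Lipschitz (being a limit of $1$-Lipschitz functions), $\tau_b(x_{n_j})(y_{n_j})\to h(y^{*})$, hence $h(y^{*})=-R$. On the other hand $h(y^{*})=\tau_b(x_0)(y^{*})=d(y^{*},x_0)-d(b,x_0)\ge -d(b,x_0)>-R$, the desired contradiction; therefore $h\notin\tau_b(X)$, i.e. $h\in X(\infty)$.

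The ingredients used — the $1$-Lipschitz bound on each $\tau_b(x)$ and the lower bound $\tau_b(x)(y)\ge -d(b,y)$, the continuity of $x\mapsto\tau_b(x)$, and the metrizability of compact convergence — are all immediate from the triangle inequality and the facts recalled above. The one substantive step is the construction of the points $y_n$ in the reverse implication: from the bare hypothesis $d(b,x_n)\to\infty$ one must produce points at a prescribed finite distance $R$ from $b$ on which $\tau_b(x_n)$ already takes the value $-R$, which is too negative to be attained by any $\tau_b(x_0)$. Properness provides the compact ball in which these points accumulate, and the geodesic assumption is exactly what guarantees their existence — this is the heart of the argument, and is where a non-geodesic space could in principle behave differently.
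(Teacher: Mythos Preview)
Your proof is correct. The forward implication is identical to the paper's. For the reverse implication you take a slightly different route: the paper runs a geodesic from $x_0$ to $x_n$, picks the point $z$ at distance $r=d(b,x_0)+1$ along it, and then carries out a chain of triangle-inequality estimates to show that $|\tau_b(x_0)(z)-\tau_b(x_n)(z)|$ is simultaneously small (from uniform convergence on $B_r(x_0)$) and at least $2-\epsilon$. You instead run the geodesic from $b$ to $x_n$, so that $\tau_b(x_n)(y_n)=-R$ exactly; after a compactness extraction you get $h(y^{*})=-R$, which contradicts the trivial lower bound $\tau_b(x_0)\ge -d(b,x_0)>-R$. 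Your version trades one extra limit-point extraction for a much shorter numerical estimate, and makes transparent the underlying reason the lemma holds: any function $\tau_b(x_0)$ is bounded below by $-d(b,x_0)$, whereas a limit along an unbounded sequence must attain arbitrarily negative values.
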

\begin{proof}
 If $h\in X(\infty)$, then there exists a sequence $(x_n)$ in $X$ such that  $\tau_b(x_n)\to h$, since the topology of compact convergence is metrizable whenever $(X,d)$ is proper. Note that $d(b,x_n)\to\infty$. Indeed, otherwise $(x_n)$ has a bounded subsequence $(x_{n_k})$ which converges to some  point say $x\in X$, as $(X,d)$ is a proper metric space. This implies that $h(y) =\lim_{k\to\infty} \tau_b(x_{n_k})(y)= \tau_b(x)(y)$ 
 for all $y\in X$,  and hence $h$ is not a horofunction, which is a contradiction.
 
To prove sufficiency, we argue by contradiction. Suppose that $h\not\in X(\infty)$. Then there exists $x_0\in X$ such that $h(y)=\tau_b(x_0)(y)$ for all $y\in X$. Let $r:=d(b,x_0)+1$ and $0<\epsilon<1$. As $\tau_b(x_n)\to h$, there exists $N\geq 1$ such that 
 \[
 \sup_{y\in B_r(x_0)}|\tau_b(x_n)(y)-h(y)|<\epsilon \mbox{\quad and\quad  }d(x_0,x_n)\geq r \mbox{\quad for all }n\geq N,
 \] 
 as the closed ball $B_r(x_0)$ is compact and $d(b,x_n)\to\infty$ as $n\to\infty$. Fix $n\geq N$ and let $\gamma_n\colon [0,\beta_n]\to X$ be a geodesic from $x_0$ to $x_n$. 
 Put $z:=\gamma_n(r)$, so $d(x_0,z)=r$, and note that
\begin{align*} 
 |\tau_b(x_0)(z)-h(z)| &\geq  |\tau_b(x_0)(z)-\tau_b(x_n)(z)|-|\tau_b(x_n)(z)-h(z)|\\
  &\geq  |d(z,x_0) -d(b,x_0) -d(z,x_n)+d(b,x_n)| -\epsilon\\
  &= |1-d(z,x_n)+d(b,x_n)|-\epsilon\\
  &\geq 1-d(z,x_n)+d(b,x_n)-\epsilon. 
\end{align*}
However,
$$d(b,x_n) \ge -d(b,x_0) +d(x_0,x_n) =-d(b,x_0) +d(x_0,z) +d(z,x_n) = 1+ d(z,x_n).$$
Substituting this lower estimate for $d(b,x_n)$ in the expression above gives a lower estimate for
$|\tau_b(x_0)(z)-h(z)|$ of $2-\epsilon > 1$, which is a contradiction. 
\end{proof} 

Recall that $f\colon X\to X$ is  a {\em nonexpansive}  map on a metric space $(X,d)$ if $d(f(x),f(y))\leq d(x,y)$ for all $x,y\in X$. 
For complete, proper, geodesic  metric space $(X,d)$ we introduce the following two properties: 
\begin{enumerate}
\item[A1.] For each nonexpansive map $f\colon X\to X$ there exists a sequence of  nonexpansive maps $f_n \colon X\to X$ such that each $f_n$ has a fixed point in $X$ and $f_n(x)\to f(x)$ as $n\to\infty$ for all $x\in X$. 
\item[A2.] If $f\colon X\to X$ is a nonexpansive map and there exists a closed ball $B$ in $(X,d)$ and a horoball $H(h,r)$ in $X$ such that $f(B\cap H(h,r))\subseteq B\cap H(h,r)$, then $f$ has a fixed point in $B\cap H(h,r)$. 
\end{enumerate}
All finite dimensional normed spaces satisfy (A1) and (A2). Indeed, their horoballs  are closed convex sets \cite{Walsh07}, so that (A2) follows from the Brouwer fixed point theorem. Furthermore, the maps $f_n:=(1-\frac{1}{n})f$ are Lipschitz contractions, which have unique fixed points, and hence (A1) holds. Other interesting metric spaces that satisfy the properties include,  Hilbert's metric spaces \cite{LNBook,Nussbaum88,HandbookHilbertGeometry,Walsh08}, Thompson's metric on finite dimensional cones \cite{LNBook,Nussbaum88,Thompson}, and hyperbolic spaces \cite{BridsonHaefliger}. Also note that a Busemann metric space satisfies property (A1). Indeed, if 
$(X,d)$ is Busemann, then we can fix $x_0\in X$ and define for each $x\in X$ an affinely re-parametrized geodesic $\gamma_x\colon [0,1]\to X$ connecting $x_0$ to $x$, so $d(\gamma_x(s),\gamma_x(t))=|t-s|d(x,x_0)$ for all $s,t\in [0,1]$. The geodesics $\gamma_x$ satisfy 
\begin{equation}\label{eq:bus}
d(\gamma_x(s),\gamma_y(s))\leq sd(x,y)\mbox{\quad  for all $x,y\in X$ and $s\in [0,1]$,}
\end{equation}
as $(X,d)$ is Busemann, see \cite[Proposition 8.1.2]{Papadopoulos}. It follows that the maps $r_s\colon X\to X$ given by, $r_s(x)=\gamma_x(s)$, are Lipschitz contractions. Thus, for each $n\geq 1$ the map $f_n\colon X\to X$ given by, $f_n(x) =f(r_{1-1/n}(x))$, is a Lipschitz contraction on $X$ that satisfies $d(f_n(x),f(x))\leq d(r_{1-1/n}(x),x) =d(\gamma_x(1-1/n),\gamma_x(1)) = \frac{1}{n}d(x,x_0)\to 0$, as $n\to\infty$, for all $x\in X$, and hence property (A1) holds. In fact, one does not need all geodesics to satisfy the Busemann convexity property (\ref{eq:bus}) as long as there are enough to define the Lipschitz contractions $r_s$. Such metric spaces have been studied in \cite{GaubertVigeral} and are called metrically star-shaped. 
  
The following proposition extends a result by Beardon \cite[Proposition 4.5]{Beardon97}. The reader should note that Beardon makes some additional assumptions that are not required in our setting. 
\begin{proposition}\label{prop:beardon} 
If $f\colon X\to X$ is a nonexpansive map on a complete, proper, geodesic metric space $(X,d)$ satisfying properties (A1) and (A2), then $\Fix(f)$ is empty  or unbounded if and only if there exists $h\in X(\infty)$ such that $h(f(x))\leq h(x)$ for all $x\in X$. 
\end{proposition}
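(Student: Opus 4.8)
The plan is to prove both directions by exploiting the compactness built into the horofunction construction together with properties (A1) and (A2).

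\textbf{Sufficiency.} Suppose there exists $h \in X(\infty)$ with $h(f(x)) \le h(x)$ for all $x \in X$. I would argue by contradiction: assume $\Fix(f)$ is nonempty and bounded. Pick a fixed point $p$; then for every $x$, $d(f(x),p) = d(f(x),f(p)) \le d(x,p)$, so every closed ball $B_R(p)$ is $f$-invariant. Since $h$ is a horofunction, by Lemma~\ref{lem:2.1} it is a limit $h = \lim_n \tau_b(x_n)$ with $d(b,x_n) \to \infty$; rechoosing the base point at $p$ changes $h$ only by an additive constant, so without loss of generality I may take $b = p$, and then the horoballs $H(h,r)$ are nonempty for $r$ large. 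The monotonicity $h \circ f \le h$ means each horoball $H(h,r)$ is $f$-invariant. Now intersect: $B_R(p) \cap H(h,r)$ is $f$-invariant, and I want to choose $R, r$ so that this set is nonempty — this holds once $r \ge \inf\{h(x) : x \in B_R(p)\}$ with $R$ large enough that the infimum is finite, which it is since $h$ is finite-valued. By (A2), $f$ has a fixed point in $B_R(p) \cap H(h,r)$. The contradiction comes from letting $R$ shrink: I need to show that $B_R(p) \cap H(h,r)$ can be made to avoid any prescribed bounded region, i.e. that $\inf_{x} h(x) = -\infty$ along directions going to infinity, so that the fixed point produced lies outside the (bounded) set $\Fix(f)$. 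Concretely, along the sequence $x_n \to \infty$ defining $h$, one has $h(x_n) = \lim_m \tau_p(x_m)(x_n)$, and using the geodesic structure one shows $h(x_n) \to -\infty$; hence for suitable large $R$ and very negative $r$ the invariant set $B_R(p) \cap H(h,r)$ is nonempty but disjoint from $\Fix(f)$, contradicting that every fixed point of $f$ in it lies in $\Fix(f)$.

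\textbf{Necessity.} Suppose $\Fix(f)$ is empty or unbounded. First handle the case where $f$ has a fixed point but $\Fix(f)$ is unbounded: then for any $c \notin \Fix(f)$ one still has points of $\Fix(f)$ arbitrarily far away, and I can take a sequence $p_n \in \Fix(f)$ with $d(b,p_n) \to \infty$; the functions $\tau_b(p_n)$ subconverge to some $h \in X(\infty)$, and since each $p_n$ is fixed, $\tau_b(p_n)(f(y)) = d(f(y),p_n) - d(b,p_n) = d(f(y),f(p_n)) - d(b,p_n) \le d(y,p_n) - d(b,p_n) = \tau_b(p_n)(y)$, so passing to the limit gives $h(f(y)) \le h(y)$. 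The harder case is $\Fix(f) = \emptyset$. Here I invoke (A1) to get nonexpansive $f_n \to f$ pointwise, each with a fixed point $p_n$. I claim $d(b,p_n) \to \infty$: otherwise a subsequence $p_{n_k} \to p$, and then $d(f(p), p) = \lim d(f(p_{n_k}), p_{n_k})$; since $d(f(p_{n_k}),p_{n_k}) \le d(f(p_{n_k}), f_{n_k}(p_{n_k})) + 0 \le d(p_{n_k}, \cdot) \to 0$ using pointwise convergence $f_{n_k}(p) \to f(p)$ and nonexpansiveness — more carefully, $d(f(p_{n_k}),p_{n_k}) \le d(f(p_{n_k}), f(p)) + d(f(p), f_{n_k}(p)) + d(f_{n_k}(p), f_{n_k}(p_{n_k})) \le 2d(p_{n_k},p) + d(f(p), f_{n_k}(p)) \to 0$ — so $p$ would be a fixed point, contradicting $\Fix(f) = \emptyset$. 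Thus $d(b,p_n) \to \infty$, the sequence $\tau_b(p_n)$ subconverges to a horofunction $h$, and the same computation as above, $\tau_b(p_n)(f_n(y)) \le \tau_b(p_n)(y)$ combined with $f_n(y) \to f(y)$ and equicontinuity, yields $h(f(y)) \le h(y)$ in the limit.

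\textbf{Main obstacle.} The delicate point is the sufficiency direction: turning the mere existence of an invariant horoball into a genuine unbounded or empty fixed point set. One must carefully engineer the invariant set $B \cap H(h,r)$ so that, as parameters vary, it is always nonempty yet eventually escapes any fixed bounded region — this requires knowing $h$ is unbounded below along the sequence escaping to infinity, which is where the geodesic hypothesis and Lemma~\ref{lem:2.1} do real work. A secondary technical care is the interchange of limits (pointwise convergence of $f_n$ versus convergence of $\tau_b(p_n)$ in the compact-convergence topology), handled via the equicontinuity of the family $\{\tau_b(x)\}$ noted in Section~\ref{sec:horo}.
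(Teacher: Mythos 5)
Your necessity direction matches the paper's argument essentially verbatim: split into the cases $\Fix(f)$ unbounded (use an unbounded sequence of fixed points) and $\Fix(f)=\emptyset$ (use (A1), show the approximate fixed points escape to infinity, pass to a subconvergent horofunction). That part is correct.

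The sufficiency direction has the right skeleton — intersect a large ball around a fixed point with a deep horoball, apply (A2), and derive a contradiction with boundedness of $\Fix(f)$ — and is the same route the paper takes. However, there is a genuine gap in how you justify the contradiction. You write that the issue is to show ``$\inf_x h(x) = -\infty$ along directions going to infinity,'' and you establish $h(x_n)\to-\infty$ along the defining sequence. But that fact only gives you that the horoballs $H(h,r)$ are \emph{nonempty} for every $r$; it does not give you that $H(h,r)$ is \emph{disjoint from any fixed bounded set} once $r$ is very negative. The latter is what the contradiction actually requires: you need to know that the fixed point produced by (A2) lies far from $p$ (or from $b$). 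The correct underlying fact — which the paper establishes, and which is missing from your argument — is that points of the horoball $H(h,r)$ are forced to be far from the base point, namely $d(b,y)\geq -r$ for all $y\in H(h,r)$. This is immediate from $h$ being $1$-Lipschitz with $h(b)=0$ (so $h(y)\geq -d(y,b)$), but it is a separate observation from $\inf h=-\infty$, and conflating the two leaves the key step unproved. (The phrase ``letting $R$ shrink'' is also inconsistent with your later, correct, ``for suitable large $R$ and very negative $r$'' — in the paper's version the radius $r_m=d(w_m,z)$ necessarily grows as $m\to\infty$.) Once you add the Lipschitz bound $d(b,y)\geq -r$ on $H(h,r)$, your argument closes and is equivalent to the paper's.
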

\begin{proof}
 The assertion that if  $\Fix(f)=\emptyset$ or unbounded, then there exists $h\in X(\infty)$ such that $h(f(x))\leq h(x)$ for all $x\in X$ is due to Beardon \cite[Proposition 4.5]{Beardon97}. For completeness we include the argument. 
  Suppose that $\Fix(f)=\emptyset$. As $(X,d)$ satisfies (A1), there exists a sequence of  nonexpansive maps $f_n \colon X\to X$ such that each $f_n$ has a fixed point $x_n\in X$ and $f_n(x)\to f(x)$ as $n\to\infty$ for all $x\in X$. Note that $d(b,x_n)\to\infty$ as $n\to\infty$, as otherwise  there exists a convergent subsequence $(x_{n_k})$, with limit say $z$, as $(X,d)$ is proper. Clearly 
  \[
  d(z,f(z))\leq 
  d(z,x_{n_k})+d(f_{n_k}(x_{n_k}),f_{n_k}(z))+d(f_{n_k}(z),f(z))\to 0\mbox{\quad as }k\to\infty,
 \] 
 so that $z$ is a fixed  point of $f$, which is impossible. 
 
 By taking a subsequence we may assume that $\tau_b(x_n)$ converges to $h\in X(\infty)$ by Lemma \ref{lem:2.1}. 
 Note that for each $x\in X$ we have that 
\begin{align*}
d(f(x),x_n)-d(b,x_n) &\leq d(f(x),f_n(x)) +d(f_n(x),f_n(x_n))-d(b,x_n)\\
&\leq d(f(x),f_n(x))+d(x,x_n)-d(b,x_n).
 \end{align*}
 By taking limits, we deduce that $h(f(x))\leq h(x)$ for all $x \in X$. If $\Fix(f)$ is unbounded, then we can use an unbounded sequence of fixed points $(x_n)$ of $f$ to create a horofunction $h$ such that $h(f(x))\leq h(x)$ for all $x\in X$. 
 
To prove the converse statement suppose that there exists a horofunction $h$ with $h(f(x))\leq h(x)$ for all $x\in X$. By Lemma \ref{lem:2.1} there exists a sequence $(z_n)$ in $X$ such that $\tau_b(z_n)\to h$ and $d(z_n,b)\to\infty$. Consider the horoballs $H_{-r}:=H(h,-r)=\{x\in X\colon h(x)\leq -r\}$ for $r\geq 0$. Note that $H_{-r}$ is nonempty. Indeed, for each $n\geq 1$  such that $d(b,z_n)>r$,  there exists 
$y_n\in \partial B_r(b)$  such that  $d(z_n,b) = d(z_n,y_n)+d(y_n,b)$, as $(X,d)$ is a geodesic space. As $\partial B_r(b)$ is compact, there exists a subsequence $(y_{n_k})$ such that $y_{n_k}\to y^*$. Thus, for each $\epsilon >0$ there exists $K\geq 1$ such that $h(y^*)-\epsilon < \tau_b(z_{n_k})(y^*)$ and 
$d(y^*,y_{n_k})<\epsilon$ for all $k\geq K$. We have  that 
\begin{align*}
\tau_b(z_{n_k})(y^*) &= d(y^*,z_{n_k})-d(z_{n_k},b) \\
&\leq d(y^*,y_{n_k}) + d(y_{n_k},z_{n_k}) -d(z_{n_k},b) \\
&<\epsilon -d(y_{n_k},b) =\epsilon -r,
\end{align*}
which implies that $h(y^*)\leq -r$.

Note that if $y\in H_{-r}$, then for each $\epsilon >0$ small and each $n$ sufficiently large we have that 
 \[
 d(y,z_n)-d(b,z_n)\leq -r+\epsilon,
 \]
 so that $d(y,b)\geq d(b,z_n)-d(y,z_n)\geq r-\epsilon$. Thus, $d(b,y)\geq r$ for all $y\in H_{-r}$. 
 
 Now suppose that $\Fix(f)$ is nonempty, and let $z\in \Fix(f)$. For $m\geq 1$ pick $w_m\in H_{-m}$ and put $r_m:=d(w_m,z)$. Then we know that $f(B_{r_m}(z)\cap H_{-m})\subseteq B_{r_m}(z)\cap H_{-m}$, for all $m\geq 1$. By property (A2), $f$ has a fixed point $v_m\in B_{r_m}(z)\cap H_{-m}$ for each $m\geq 1$. But then $d(v_m,b)\geq m$, which shows that $\Fix(f)$ is unbounded, which completes the proof.
\end{proof}

For a finite dimensional normed space $(V,\|\cdot\|)$, let $(V^*,\|\cdot\|_*)$ denote the dual space. Recall that a norm is \textit{smooth} if for every $z \in V$ with $\|z\|=1$, there is a unique $\phi \in V^*$, $\|\phi\|_* = 1$, such that $\phi(z) = 1$. It is a consequence of \cite[Proposition 6.2]{Rieffel} that for a finite dimensional vector space with a smooth norm, the horofunctions are given by $h(x) =-\phi(x-b)$ for $x\in V$, where $b \in V$ and $\phi\in V^*$ are fixed with $\|\phi\|_*=1$. This gives rise to the following generalization of \cite[Theorem 7]{Beardon90}. 
\begin{corollary}\label{cor:2.3}
If $f\colon V\to V$ is a nonexpansive map on a finite dimensional normed space $(V,\|\cdot\|)$ with a smooth norm, then $\Fix(f)$ is empty or unbounded if and only if there exists a nonzero linear functional $\phi\in V^*$ such that $\phi(f(x))\geq \phi(x)$ for all $x\in V$.
\end{corollary}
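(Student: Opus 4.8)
The plan is to derive this directly from Proposition~\ref{prop:beardon} together with the description of the horofunction boundary of a smooth finite dimensional normed space recalled just above the statement. First I would observe that a finite dimensional normed space $(V,\|\cdot\|)$ is a complete, proper, geodesic metric space: completeness and properness are immediate from finite dimensionality (Heine--Borel), and the straight line segments $t\mapsto x+\tfrac{t}{\|y-x\|}(y-x)$, suitably parametrized, are geodesics. As noted in the paragraph preceding Proposition~\ref{prop:beardon}, $V$ satisfies (A1) (via the contractions $(1-\tfrac1n)f$) and (A2) (via convexity of horoballs and the Brouwer fixed point theorem). Hence Proposition~\ref{prop:beardon} applies: $\Fix(f)$ is empty or unbounded if and only if there is a horofunction $h\in V(\infty)$ with $h(f(x))\le h(x)$ for all $x\in V$.

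Next I would translate this condition using the explicit form of the horofunctions. By the consequence of \cite[Proposition 6.2]{Rieffel} recalled above, for a smooth norm and the fixed base point $b$, the horofunctions are precisely the functions $h_\phi(x)=-\phi(x-b)$ with $\phi\in V^*$ and $\|\phi\|_*=1$. For such an $h_\phi$ the inequality $h_\phi(f(x))\le h_\phi(x)$ reads $-\phi(f(x)-b)\le-\phi(x-b)$, which after cancelling $\phi(b)$ is equivalent to $\phi(f(x))\ge\phi(x)$. Thus the existence of a horofunction $h$ with $h(f(x))\le h(x)$ for all $x$ is equivalent to the existence of $\phi\in V^*$ with $\|\phi\|_*=1$ and $\phi(f(x))\ge\phi(x)$ for all $x$.

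Finally I would note that replacing ``$\|\phi\|_*=1$'' by ``$\phi$ nonzero'' changes nothing: given any nonzero $\psi\in V^*$ with $\psi(f(x))\ge\psi(x)$ for all $x$, the normalized functional $\phi:=\psi/\|\psi\|_*$ satisfies $\|\phi\|_*=1$ and, since $\|\psi\|_*>0$, still satisfies $\phi(f(x))\ge\phi(x)$ for all $x$; the converse implication is trivial. Combining the three steps gives the corollary. I do not expect a genuine obstacle: the real content is in Proposition~\ref{prop:beardon} and in Rieffel's identification of the horofunction boundary. The one point to be careful about is recording that the correspondence $\phi\mapsto h_\phi$ is \emph{onto} the set of horofunctions (not merely into it), so that both implications of Proposition~\ref{prop:beardon} transfer to the statement about linear functionals.
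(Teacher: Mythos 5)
Your proposal is correct and follows exactly the route the paper intends (the paper states the corollary without a proof environment because the preceding paragraph — Rieffel's identification of horofunctions of a smooth norm as $h_\phi(x) = -\phi(x-b)$ with $\|\phi\|_* = 1$, combined with Proposition~\ref{prop:beardon} — is the proof). Your explicit normalization step and your remark that the map $\phi \mapsto h_\phi$ must be onto the horofunction boundary for both directions of Proposition~\ref{prop:beardon} to transfer are exactly the right points to record.
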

\begin{remark}
Part of Corollary \ref{cor:2.3} does not depend on the smoothness of the norm.  
Indeed, suppose that $C$ is a closed, convex subset of a finite dimensional normed linear space $(V,\|\cdot\|)$, where the norm is not necessarily smooth.  Let $f\colon C \to C$ be a nonexpansive map and suppose that $\phi$ is a nonzero linear functional on $V$ such that (i) $\{x \in C \colon \phi(x) \ge a \}$ is nonempty for all $a$, and (ii) $\phi(f(x)) \ge \phi(x)$ for all $x \in V$.  Then $\Fix(f)$ is empty or unbounded.  

To show this, let $H_m := \{x \in C \colon \phi(x) \ge m \}$, $m \in \N$, so $H_m$ is closed, convex and nonempty. Assume $\Fix(f)$ is nonempty and take $z \in \Fix(f)$ and $w_m \in H_m$ and define $r_m := \|z-w_m\|$.  By our construction $w_m \in B_{r_m}(z) \cap H_m$ and $B_{r_m}(z) \cap H_m$ is closed, bounded and convex. Because $f$ is nonexpansive and condition (ii) above is satisfied, $f(B_{r_m}(z) \cap H_m) \subseteq B_{r_m} \cap H_m$, so Brouwer's fixed point theorem implies that $f$ has a fixed point $x_m \in B_{r_m} \cap H_m$.  Since $\phi(x_m) \ge m$, $\|x_m\| \ge m/\|\phi\|$ and $\Fix(f)$ is unbounded.  

\end{remark}

Proposition \ref{prop:beardon} gives a criterion for the existence of a fixed point in terms of horofunctions and one may wonder for which metric spaces this criterion can be verified computationally. In this paper we will see that for finite dimensional normed spaces there is a nice way to check the criterion by using so-called illuminating vectors.

\section{Fixed points in normed spaces} \label{sec:illum}
Given a compact convex set $K$ in a finite dimensional vector space $V$ with nonempty interior, we say that $x\in\partial K$ is {\em illuminated by} $w\in V$ if $x+\lambda w\in \inter K$ for some $\lambda>0$. A set $S\subseteq V$ is said to {\em illuminate } $K$ if each point in $\partial K$ is illuminated by some $w\in S$.  The {\em illumination number} of $K$ is defined by 
\[
c(K):=\min\{|S|\colon S\subset V \text{ illuminates } K\}.
\]
The illumination number was introduced by Boltjanski \cite{Boltjanksi}, who showed that it is equal to the so-called {\em covering number} $b(K)$ of $K$, which is the smallest number of strictly smaller homothetical copies $K_1,\ldots,K_m$ of $K$ which cover $K$, so $K\subseteq \cup_{i=1}^m K_i$. It is clear by compactness that $b(K)$ is a finite number. Gohberg and Markus \cite{GohbergMarkus} conjectured that  $b(K)\leq 2^n$ for any compact, convex body $K$ in $V$, where $n=\dim (V)$. Moreover, equality holds if and only if $K$ is an $n$-dimensional parallelepiped. Hadwiger \cite{Hadwiger57} also independently raised the question of the maximal value of $b(K)$. Gohberg and Markus's conjecture is commonly referred to as the Illumination Conjecture, and remains unsolved for general compact convex sets.  A detailed survey is given in \cite[Chapter VI]{BoltyanskiMartiniHorst}.

Before stating our main result, let us recall the definition of the topological degree  in finite dimensional vector spaces $V$. Given an open, bounded set $G\subseteq V$ and a continuous map $f\colon \cl G\to V$ such that $f(x)\neq a$ for all $x\in\partial G$, there exists an integer $\deg(f,G,a)$ called the {\em topological degree of $f$ on $G$ with respect to $a$}, which has the following properties: 
\begin{enumerate}
\item[D1.] $\deg(\id, G,a) = 1$ if $a\in G$ and $\deg(\id,G,a)=0$ if $a\not\in \cl G$, where $\id$ is the identity map on $V$. 
\item[D2.] (Additivity Property) If $G_1$ and $G_2$ are disjoint, open subsets of $G$ and if $a\not\in f(\cl G\setminus(G_1\cup G_2))$, then 
\[
\deg(f,G,a) = \deg(f,G_1,a)+\deg(f,G_2,a). 
\]
\item[D3.] (Homotopy Property) Let $F\colon \cl G\times [0,1]\to V$ be a continuous map, and let $f_t\colon \cl G\to V$ be given by $f_t(x)=F(x,t)$ for all $t\in [0,1]$. If $a\not\in F(\partial G\times [0,1])$, then $\deg(f_t,G,a)$ is constant for all $t\in [0,1]$. 
\end{enumerate} 

We allow the possibility that $G$, $G_1$, or $G_2$ is empty, so (D2) gives $\deg(f,\emptyset, a)=0$. Notice this implies that if $\deg(f,G,a)\neq 0$, there exists $x\in \cl G$ with $f(x) =a$, but \emph{not} conversely. Amann and Weiss \cite{AmannWeiss} proved that properties (D1), (D2) and (D3) uniquely determine the topological degree. We refer the reader to \cite{Deimling} for further details.

The following proposition is almost certainly known, but we are unaware of a reference. 
\begin{proposition}\label{prop:2.4} Let $G$ be an open, bounded subset of a finite dimensional normed space $(V,\|\cdot\|)$, and suppose that $g\colon \cl G \to V$ is a continuous map with $g(x)\neq 0$ for all $x\in\partial G$. If $\deg(g,G,0)\neq 0$, then for each $b\in V$ with $\|b\|=1$ there exists  $x\in\partial G$ such that $b=g(x)/\|g(x)\|$. 
\end{proposition}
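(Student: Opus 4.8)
The plan is to argue by contradiction, using the homotopy invariance property (D3) of the topological degree. Suppose, contrary to the conclusion, that there is some $b\in V$ with $\|b\|=1$ such that no $x\in\partial G$ satisfies $b=g(x)/\|g(x)\|$. Since $g$ does not vanish on $\partial G$, this is equivalent to saying that $g(x)\neq tb$ for every $x\in\partial G$ and every real $t\ge 0$. The strategy is then to translate $g$ along the ray $\{tb:t\ge 0\}$ until the value $0$ is pushed out of the image of the compact set $\cl G$ entirely; the point $0$ never meets the image of $\partial G$ during this translation, so the degree is unchanged, yet it must now be $0$, contradicting the hypothesis $\deg(g,G,0)\neq 0$.

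To carry this out I would first note that we may assume $G\neq\emptyset$, since otherwise $\deg(g,G,0)=0$ by (D2) and there is nothing to prove; then $\partial G\neq\emptyset$ because $G$ is bounded. Set $M:=\max_{x\in\cl G}\|g(x)\|$, which is finite by compactness of $\cl G$ and continuity of $g$, and note $M>0$ since $\partial G\neq\emptyset$ and $g\neq 0$ there. Fix $T:=M+1$ and define $F\colon\cl G\times[0,1]\to V$ by $F(x,t):=g(x)-tTb$. This map is continuous with $F(\cdot,0)=g$ and $F(\cdot,1)=g-Tb$. The key verification is that $0\notin F(\partial G\times[0,1])$: for $t=0$ this is the hypothesis $g(x)\neq 0$ on $\partial G$, while for $t\in(0,1]$ the equality $g(x)=tTb$ would force $\|g(x)\|=tT>0$ and $g(x)/\|g(x)\|=b$, contradicting the choice of $b$. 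Thus the homotopy is admissible.

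By (D3) we then get $\deg(g,G,0)=\deg(g-Tb,G,0)$. Since the left-hand side is nonzero, the degree property recalled just after (D1)--(D3) produces a point $x\in\cl G$ with $g(x)-Tb=0$, whence $\|g(x)\|=T=M+1>M\ge\|g(x)\|$, a contradiction. Hence every unit vector $b$ is of the form $g(x)/\|g(x)\|$ for some $x\in\partial G$, as claimed. I do not expect a genuine obstacle in this argument; the only steps needing a little care are the reduction to the case $G\neq\emptyset$ and, more importantly, the admissibility check $0\notin F(\partial G\times[0,1])$, which is precisely where the contrapositive assumption on $b$ enters and which must be verified for \emph{all} $t\in[0,1]$, not merely the endpoints.
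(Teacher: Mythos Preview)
Your proof is correct and follows essentially the same approach as the paper: a contradiction via the homotopy property (D3), using a straight-line homotopy in the direction of $b$ whose admissibility on $\partial G$ is exactly the contrapositive hypothesis that $g(x)\neq\lambda b$ for $\lambda>0$. The only cosmetic difference is that the paper uses the convex homotopy $g_t(x)=(1-t)g(x)-tb$, which terminates at the constant map $-b$ (so the degree is visibly $0$ without needing the bound $M$), whereas you translate by $tTb$ and use $T>M$ to rule out zeros at the endpoint; both endpoints yield degree $0$ for the same reason, namely the absence of solutions in $\cl G$.
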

\begin{proof}
We argue by contradiction. So, suppose that there exists $b\in V$ with $\|b\|=1$ such that $g(x)\neq \|g(x)\|b$ for all $x\in\partial G$. Then for each $x\in\partial G$ and each $\lambda>0$ we have that $g(x)\neq \lambda b$. Now define for $t\in [0,1]$ a continuous map $g_t\colon \cl G\to V$ by 
\[
g_t(x)=(1-t)g(x)-tb \text{ for all }x\in\cl G.
\] 
Note that for $0\leq t<1$ we have that $g_t(x)\neq 0$ for all $x\in\partial G$, as otherwise $g(x) =\frac{t}{1-t}b$ for some $x\in\partial G$. Also $g_1(x)=-b\neq 0$ for all $x\in \cl G$. So, the homotopy property (D3) gives $\deg(g,G,0)=\deg(g_1,G,0)=0$, which is a contradiction.
\end{proof}

We also have the following proposition. 
\begin{proposition}\label{prop:2.5}  Let $D$ be an open subset of a finite dimensional normed space $(V,\|\cdot\|)$. Suppose that  $f\colon D\to V$ is a nonexpansive map such that $\Fix(f)$ is compact and nonempty. Then $\Fix(f)$ is connected. If $G$ is a bounded open set such that $\Fix(f)\subset G$ and $\cl G \subset D$, then $\deg(\id-f,G,0)=1$. 
\end{proposition}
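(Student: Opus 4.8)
The plan is to prove the degree identity first, since the connectedness of $\Fix(f)$ will then follow from the additivity of the degree. Thus the first step is to establish the following claim: if $U\subseteq V$ is bounded and open with $\cl U\subseteq D$, $\Fix(f)\cap\partial U=\emptyset$, and $\Fix(f)\cap U\neq\emptyset$, then $\deg(\id-f,U,0)=1$. The second assertion of the proposition is the special case $U=G$, since $\Fix(f)\subset G$ forces $\Fix(f)\cap\partial G=\emptyset$ and $\Fix(f)\cap G=\Fix(f)\neq\emptyset$. To prove the claim I would pick $z\in\Fix(f)\cap U$ and use the homotopy $H\colon\cl U\times[0,1]\to V$, $H(x,t)=x-z-t(f(x)-z)$, which interpolates between $\id-z$ at $t=0$ and $\id-f$ at $t=1$. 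If $H(x,t)=0$ for some $x\in\partial U$, then $\|x-z\|=t\|f(x)-f(z)\|\leq t\|x-z\|$; for $t<1$ this forces $x=z\in U$, and for $t=1$ it forces $x\in\Fix(f)$, so in either case $x\notin\partial U$ (using $\Fix(f)\cap\partial U=\emptyset$ in the second case), a contradiction. Hence $0\notin H(\partial U\times[0,1])$, and the homotopy property (D3) yields $\deg(\id-f,U,0)=\deg(\id-z,U,0)$, which equals $1$ by the translation invariance of the degree together with (D1), as $0\in U-z$.

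For the connectedness of $\Fix(f)$ I would argue by contradiction. If $\Fix(f)$ were disconnected then, being compact and hence closed in $V$, it could be written as $\Fix(f)=F_0\cup F_1$ with $F_0,F_1$ nonempty, disjoint and compact. Since $F_0$ and $F_1$ are disjoint compact subsets of the open set $D$, one can choose disjoint bounded open sets $G_0\supseteq F_0$ and $G_1\supseteq F_1$ with $\cl G_0\cap\cl G_1=\emptyset$ and $\cl G_0,\cl G_1\subseteq D$ (e.g.\ sufficiently thin open neighborhoods of $F_0$ and $F_1$). From $\cl G_0\cap\cl G_1=\emptyset$ and $\Fix(f)\subseteq G_0\cup G_1$ it follows that $\Fix(f)\cap\partial G_i=\emptyset$ and $\Fix(f)\cap G_i=F_i\neq\emptyset$ for $i=0,1$, and also $\Fix(f)\cap\partial(G_0\cup G_1)=\emptyset$. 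Applying the claim to $U=G_0$, $U=G_1$ and $U=G_0\cup G_1$ gives $\deg(\id-f,G_0,0)=\deg(\id-f,G_1,0)=\deg(\id-f,G_0\cup G_1,0)=1$, whereas the additivity property (D2) gives $\deg(\id-f,G_0\cup G_1,0)=\deg(\id-f,G_0,0)+\deg(\id-f,G_1,0)=2$. This contradiction shows that $\Fix(f)$ is connected.

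The routine points to attend to are that $f$ is continuous (hence bounded) on each $\cl U\subseteq D$ so that the degree is defined, that $\Fix(f)$ is closed (so that its compactness is exactly the hypothesis, not an extra assumption), and that the separating neighborhoods of $F_0$ and $F_1$ above can be taken with closures inside $D$; none of these should be difficult, using only that $D$ is open and $\Fix(f)$ is compact. The one point that requires a bit of care is the evaluation $\deg(\id-z,U,0)=1$: this rests on the translation invariance of the degree, which is standard (see \cite{Deimling}) and also follows from (D1)--(D3) via the change of variables $y=x-z$. Beyond that, the argument is just a bookkeeping application of (D1), (D2) and (D3), with the single genuine input being nonexpansiveness of $f$, which is exactly what makes the homotopy $H$ admissible.
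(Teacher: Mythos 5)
Your proof is correct and follows the same overall plan as the paper's: establish $\deg(\id-f,U,0)=1$ via a straight-line homotopy between $\id-f$ and the translate $\id-z$ at a fixed point $z\in U$, then deduce connectedness of $\Fix(f)$ by contradiction from the additivity property (D2). Your execution of the homotopy step is, however, noticeably cleaner. The paper uses the same affine family $f_t(x)=(1-t)f(x)+tx_0$ but verifies admissibility in two stages: a perturbation estimate for small $t$ to obtain $\deg(\id-f,H,0)=\deg(\id-f_\delta,H,0)$, followed by an application of additivity to pass to a small ball $B_r(x_0)$ before running the homotopy from $t=\delta$ to $t=1$. You observe that a boundary zero of the homotopy at any $t<1$ forces $x=z$ by nonexpansiveness (since $z$ is fixed), while $t=1$ is excluded directly because $\Fix(f)\cap\partial U=\emptyset$; this disposes of the whole interval $[0,1]$ at once and removes the need for the intermediate $\delta$ and the auxiliary ball. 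The connectedness argument is essentially identical to the paper's; you are slightly more careful in insisting on $\cl G_0\cap\cl G_1=\emptyset$ (needed so that the degrees $\deg(\id-f,G_i,0)$ are defined when $\Fix(f)$ has points in both pieces), which the paper leaves implicit under the word ``disjoint.'' The one small imprecision you flag yourself---that $\deg(\id-z,U,0)=1$ rests on translation invariance of the degree, not literally on (D1)---is shared by the paper's own proof and is harmless, since translation invariance is standard and follows from (D1)--(D3).
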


\begin{proof}
By our assumptions $x \ne f(x)$ for all $x \in \partial G$ since $\Fix(f) \subset G$ and $G$ is open. Therefore $\deg(\id-f,G,0)$ is defined.   More generally, suppose that $H$ is a bounded open subset such that $H \cap \Fix(f)$ is nonempty, $\cl H \subset D$, and $f(x) \neq x$ for all $x \in \partial H$.  Because $\partial H$ is a compact set and $f$ is a continuous map such that $\|x-f(x)\| >0$ for all $x\in \partial H$, there exists $c > 0$ with $\|x-f(x)\| \ge c$ for all $x \in \partial H$.

By assumption, there exists $x_0 \in \Fix(f)\cap H$. For $0 \le t \le 1$ define $f_t (x):=(1-t)f(x)+tx_0$. Note that 
$$\|x-f(x)-(x-f_t(x))\|= t\|f(x)-x_0\|=t\|f(x)-f(x_0)\| \le t\|x-x_0\|.$$
It follows that there exists $\delta > 0$ such that for all $x \in \partial H$ and all $t$ with $0 \le t \le \delta$, 
$$\|x-f(x)-(x-f_t(x))\|\le \delta \|x-x_0\| < c.$$
The homotopy property (D3) now implies that 
$$\deg(\id-f,H,0) = \deg(\id-f_\delta,H,0).$$
Because $f$ is nonexpansive,
$$\|f_\delta(x) - f_\delta(y)\| \le (1-\delta) \|x-y\|$$
for all $x, y \in \cl H$, which implies that $f_\delta$ has at most one fixed point in $H$.  However, by our construction, we have $f_\delta(x_0) = x_0$.  If we select a number $r>0$ such that $B_r(x_0) \subseteq H$, the additivity property (D2) implies that 
$$\deg(\id-f_\delta,H,0) = \deg(\id-f_\delta,\inter B_r(x_0),0 ).$$

For $x \in \partial B_r(x_0)$ and $\delta \le t \le 1$, we have that 
\begin{align*}
\|x-f_t(x)\| &= \|x-x_0-(f_t(x)-f_t(x_0))\| \\
&\ge r - \|f_t(x)-f_t(x_0)\| \\
&\ge r-(1-t)\|x-x_0\| = tr > 0,
\end{align*}
so the homotopy property (D3) implies that 
$$\deg(\id-f_\delta,\inter B_r(x_0),0) = \deg(\id-x_0,\inter B_r(x_0),0) = 1,$$
where we have also used (D1).  Thus, under our assumptions on $H$, we have proved that $\deg(\id-f,H,0) = 1$.

It is clear that $G$ can be selected as in the statement of Proposition \ref{prop:2.5}, and as previously noted, $x - f(x) \ne 0$ for $x \in \partial G$.  Thus it follows from the above results that $\deg(\id-f,G,0) = 1$. To complete the proof, we argue by contradiction and assume that $\Fix(f)$ is not connected.   It follows that $\Fix(f) = A \cup B$, where $A$ and $B$ are disjoint, nonempty, compact sets.  It follows that there exist disjoint, bounded, open sets $H_A$ and $H_B$ with $A \subset H_A$, $B\subset H_B$, $\cl H_A \subset D$ and $\cl H_B \subset D$.  Our previous arguments imply that $\deg(\id-f,H_A,0) = \deg(\id-f,H_B,0) = 1$. Writing $H = H_A \cup H_B$, we also have that $\deg(\id-f,H,0) = 1$. However, the additivity property (D2) implies that 
$$1 = \deg(\id-f,H_A \cup H_B,0) = \deg(\id-f,H_A,0) + \deg(\id-f,H_B,0) =2,$$
a contradiction.    
\end{proof}

If the set $D$ in Proposition \ref{prop:2.5} is convex and $f$ is nonexpansive on $\cl D$, Bruck \cite{Bruck} has proved that $\Fix(f)$ is a nonexpansive retract of $D$, which gives connectedness of $\Fix(f)$ as a very special case.  A simple proof of this result, valid for the finite dimensional case, is given in \cite[\S 4]{Nussbaum98}.

Combining Propositions \ref{prop:2.4} and \ref{prop:2.5} gives the following corollary. 
\begin{corollary}\label{cor:everyDirection} Let $D$ be an open subset of a finite dimensional normed space $(V,\|\cdot\|)$. Suppose that  $f\colon D\to V$ is nonexpansive and $\Fix(f)$ is compact and nonempty. If $G$ is a bounded open set such that $\cl G \subset D$ and $\Fix(f) \subset G$, then for each $y\in V\setminus\{0\}$ there exists $w\in\partial G$ and $\lambda > 0$ such that $w-f(w) = \lambda y$. 
\end{corollary}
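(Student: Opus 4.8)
The plan is to combine the two preceding propositions directly, the only real work being to translate between the ``$b = g(x)/\|g(x)\|$'' language of Proposition~\ref{prop:2.4} and the ``$w - f(w) = \lambda y$'' language we want here. Set $g := \id - f$ on $\cl G$. Since $\Fix(f) \subset G$ and $G$ is open, we have $g(x) = x - f(x) \neq 0$ for all $x \in \partial G$, so $\deg(g, G, 0) = \deg(\id - f, G, 0)$ is defined, and Proposition~\ref{prop:2.5} gives $\deg(\id - f, G, 0) = 1 \neq 0$. Hence the hypothesis of Proposition~\ref{prop:2.4} is satisfied.

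Now fix $y \in V \setminus \{0\}$ and apply Proposition~\ref{prop:2.4} to the unit vector $b := y/\|y\|$. This yields $w \in \partial G$ with
\[
\frac{g(w)}{\|g(w)\|} = \frac{w - f(w)}{\|w - f(w)\|} = \frac{y}{\|y\|}.
\]
Setting $\lambda := \|w - f(w)\|/\|y\| > 0$ then gives $w - f(w) = \lambda y$, which is exactly the claim. One small point to check is that such a bounded open $G$ with $\Fix(f) \subset G$ and $\cl G \subset D$ exists at all: this is immediate because $\Fix(f)$ is compact and contained in the open set $D$, so it can be separated from $\partial D$ (or from infinity, if $D = V$) by a bounded open neighbourhood whose closure still lies in $D$ — and this was already observed in the proof of Proposition~\ref{prop:2.5}.

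There is essentially no obstacle here; the corollary is a formal consequence of the two propositions. The only thing one should be slightly careful about is that Proposition~\ref{prop:2.4} requires $g$ to be defined and continuous on all of $\cl G$ (not just $G$), which is fine since $f$ is defined on $D \supseteq \cl G$ and nonexpansive maps are continuous. If anything deserves a sentence of comment, it is the geometric interpretation: the conclusion says that from any boundary point of $G$ one can ``escape'' in every prescribed direction by a suitable displacement $w - f(w)$, which is precisely the link to illumination exploited in Theorem~\ref{thm:main}.
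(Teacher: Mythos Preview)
Your proof is correct and is exactly the argument the paper intends: the corollary is stated immediately after Propositions~\ref{prop:2.4} and~\ref{prop:2.5} with the remark that it follows by combining them, and your write-up just makes that combination explicit. The extra paragraph about the existence of $G$ is unnecessary here since $G$ is given in the hypothesis, but it does no harm.
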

Note that if $f$ in Corollary \ref{cor:everyDirection} is defined on the whole of $V$ and $p \in V$, we can take $G$ to be any open ball with center $p$ and sufficiently large radius.

We now state the main result of this section.
\begin{theorem}\label{thm:main}
If $f\colon V\to V$ is a nonexpansive map on a finite dimensional normed space $(V,\|\cdot\|)$, then the following statements are equivalent: 
\begin{enumerate}
\item $\Fix(f)$ is nonempty and bounded. 
\item There exists a bounded open subset $G \subset V$ such that $x - f(x) \neq 0$ for all $x \in \partial G$ and $\deg(\id-f,G,0) = 1$.  
\item For every $y \in V \backslash \{0\}$, there exists $w \in V$ and $\lambda > 0$ such that $w-f(w) = \lambda y$.  
\item There exist $w_1,\ldots,w_m$ in $V$ such that $\{f(w_i)-w_i\colon i=1,\ldots,m\}$ illuminates $B_1(0)$. 
\item There exists no horofunction $h$  of $(V,\|\cdot\|)$ such that $h(f(x))\leq h(x)$ for all $x\in V$. 
\end{enumerate} 
\end{theorem}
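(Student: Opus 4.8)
The plan is to prove the cycle $(1)\Rightarrow(2)\Rightarrow(3)\Rightarrow(4)\Rightarrow(1)$, together with the separate equivalence $(1)\Leftrightarrow(5)$. Three of these links are essentially immediate from what is already in place. For $(1)\Rightarrow(2)$: $\Fix(f)$ is closed (as $f$ is continuous), so a bounded, nonempty fixed point set is compact, and Proposition~\ref{prop:2.5} applies with $D=V$ and $G$ any sufficiently large open ball. For $(2)\Rightarrow(3)$: apply Proposition~\ref{prop:2.4} to $g=\id-f$; given $y\neq 0$ put $b=y/\|y\|$ to obtain $w\in\partial G$ with $(w-f(w))/\|w-f(w)\|=b$, so $w-f(w)=\lambda y$ with $\lambda=\|w-f(w)\|/\|y\|>0$. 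For $(1)\Leftrightarrow(5)$: the metric induced by $\|\cdot\|$ makes $(V,\|\cdot\|)$ a complete, proper, geodesic metric space satisfying (A1) and (A2), so by Proposition~\ref{prop:beardon}, $\Fix(f)$ is empty or unbounded precisely when there is a horofunction $h$ with $h(f(x))\leq h(x)$ for all $x\in V$; negating both sides gives $(1)\Leftrightarrow(5)$.

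For the remaining two links I would first record three elementary facts about illumination of the unit ball $B_1(0)$. (a) If $u\in\partial B_1(0)$ is illuminated by $v$, then $\phi(v)<0$ for every $\phi\in V^*$ with $\|\phi\|_*=1$ and $\phi(u)=1$; indeed, for the $\lambda>0$ witnessing illumination, $1>\|u+\lambda v\|\geq\phi(u+\lambda v)=1+\lambda\phi(v)$. (b) For a fixed $v$, the set of points of $\partial B_1(0)$ illuminated by $v$ is relatively open, since the condition $u+\lambda_0 v\in\inter B_1(0)$ persists under small perturbations of $u$. (c) For $u\in\partial B_1(0)$ and $\lambda>0$, the vector $-\lambda u$ illuminates $u$, as $u+\mu(-\lambda u)=(1-\mu\lambda)u$ has norm $1-\mu\lambda<1$ for small $\mu>0$. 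Granting these, $(3)\Rightarrow(4)$ is a compactness argument: for each $u\in\partial B_1(0)$, applying $(3)$ with $y=u$ gives $w_u$ and $\lambda_u>0$ with $w_u-f(w_u)=\lambda_u u$, so $v_u:=f(w_u)-w_u=-\lambda_u u$ illuminates $u$ by (c); the relatively open sets $\{z\in\partial B_1(0):v_u\text{ illuminates }z\}$ cover the compact set $\partial B_1(0)$, and a finite subcover $U_{u_1},\dots,U_{u_m}$ produces $w_1:=w_{u_1},\dots,w_m:=w_{u_m}$ with $\{f(w_i)-w_i\}$ illuminating $B_1(0)$.

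The substance of the proof is $(4)\Rightarrow(1)$. Write $v_i=f(w_i)-w_i$ and assume $\{v_1,\dots,v_m\}$ illuminates $B_1(0)$. First, $\Fix(f)$ is bounded: if $x_n\in\Fix(f)$ with $\|x_n\|\to\infty$ and, along a subsequence, $x_n/\|x_n\|\to u$, choose $v_i$ illuminating $u$; nonexpansiveness of $f$ between $x_n$ and $w_i$ gives $\|x_n-w_i-v_i\|\leq\|x_n-w_i\|$, so with $t_n:=\|x_n-w_i\|\to\infty$, $e_n:=(x_n-w_i)/t_n\to u$, and $\phi_n\in V^*$ chosen with $\|\phi_n\|_*=1$, $\phi_n(e_n)=1$, one gets $1\geq\|e_n-v_i/t_n\|\geq 1-\phi_n(v_i)/t_n$, hence $\phi_n(v_i)\geq 0$; any subsequential limit $\phi^*$ of $(\phi_n)$ satisfies $\|\phi^*\|_*=1$, $\phi^*(u)=1$, and $\phi^*(v_i)\geq 0$, contradicting (a). Second, $\Fix(f)$ is nonempty: we may assume $f(0)\neq 0$, and for $s\in(0,1)$ the contraction $(1-s)f$ has a unique fixed point $p_s=(1-s)f(p_s)\neq 0$, with $f(p_s)-p_s=\mu_s u_s$, where $u_s:=p_s/\|p_s\|$ and $\mu_s:=\frac{s}{1-s}\|p_s\|>0$; the identity $\|p_s\|+\mu_s=\|f(p_s)\|\leq\|p_s\|+\|f(0)\|$ yields the uniform bound $\mu_s\leq\|f(0)\|$. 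Running the same estimate as above, but with $a_n:=\mu_{s_n}u_{s_n}-v_i$ (which converges along a subsequence, since $\mu_{s_n}$ and $u_{s_n}$ do) in place of $-v_i$, shows that $\|p_s\|$ stays bounded as $s\to 0^+$; hence, for any sequence $s_n\to 0^+$, some subsequence of $(p_{s_n})$ converges to a point $p$, and $\|f(p)-p\|\leq\lim\|f(p_{s_n})-p_{s_n}\|=\lim\mu_{s_n}=0$, so $p\in\Fix(f)$.

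The step I expect to be the main obstacle is $(4)\Rightarrow(1)$, and within it the nonemptiness of $\Fix(f)$. Boundedness follows fairly directly from nonexpansiveness and fact (a), but exhibiting an actual fixed point needs the contraction-approximation device, and the key point that makes the limit work is the a priori bound $\mu_s\leq\|f(0)\|$, which keeps the auxiliary displacements from escaping to infinity so that the illumination estimate survives in the limit. A secondary technical point, running through the whole argument, is that the norm is not assumed smooth: one must work with the compact set of supporting functionals at a point and use that a subsequential limit of supporting functionals at $e_n\to u$ is again a supporting functional at $u$, rather than with a single gradient. An alternative to $(4)\Rightarrow(1)$ would be to establish $(4)\Rightarrow(5)$ directly via Walsh's description of the horofunctions of a finite dimensional normed space, but the route above has the advantage of being self-contained.
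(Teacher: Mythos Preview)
Your proof is correct, and the links $(1)\Rightarrow(2)\Rightarrow(3)$ and $(1)\Leftrightarrow(5)$ match the paper exactly. Your $(3)\Rightarrow(4)$ spells out the compactness argument that the paper compresses into the phrase ``the illumination number of $B_1(0)$ is finite''; these are the same.

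The genuine difference is in closing the cycle. The paper proves $(4)\Rightarrow(5)$ directly: given a horofunction $h$, it produces a unit vector $z$ with $h(x+tz)=h(x)-t$ for all $x,t$ (obtained as a normalized limit of the defining sequence), picks $i$ so that $f(w_i)-w_i$ illuminates $z$, and uses convexity and the $1$-Lipschitz property of $h$ to deduce $h(f(w_i))>h(w_i)$; then $(5)\Rightarrow(1)$ is Proposition~\ref{prop:beardon}. You instead prove $(4)\Rightarrow(1)$ without horofunctions, via a supporting-functional argument for boundedness and a Banach-contraction approximation $p_s=(1-s)f(p_s)$ for nonemptiness, the crucial observation being the uniform bound $\mu_s=\tfrac{s}{1-s}\|p_s\|\le\|f(0)\|$, which lets the illumination estimate survive in the limit and forces $\|p_s\|$ to stay bounded.

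Both routes are clean. The paper's approach is shorter and conceptually unified: it reduces everything to the single geometric fact that a horofunction decreases linearly along some direction, and it makes the role of condition $(5)$ transparent. Your approach is more self-contained in that the hard step avoids the horofunction boundary entirely; it also makes explicit, via the duality device in fact~(a), the mechanism that in the paper is hidden inside the horofunction formalism (your limiting functional $\phi^*$ with $\phi^*(u)=1$ is effectively minus the gradient of the horofunction associated with the direction $u$). A small bonus of your route is that the boundedness and existence halves are visibly separate, whereas the paper's $(4)\Rightarrow(5)\Rightarrow(1)$ treats them together.
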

\begin{proof} The statement (1) implies (2) by Proposition \ref{prop:2.5}. The implication (2) implies (3) follows from Proposition \ref{prop:2.4}. Also note that (3) implies (4), as the illumination number of $B_1(0)$ is finite. Proposition \ref{prop:beardon} gives that (5) implies (1). All that remains is to prove that (4) implies (5). 

Let $h$ be a horofunction in the horofunction compactification of $(V,\|\cdot\|)$ with base point $b$. We begin by observing that there exists $z \in V$, $\|z\|=1$, such that 
\begin{equation} \label{eq:horopath}
h(x+tz) = h(x) - t
\end{equation} 
for all $x \in V$ and $t \in \R$.  Indeed, by Lemma \ref{lem:2.1} there is a sequence $(y_k)$ in $V$ such that 
$$h(x) = \lim_{k \rightarrow \infty} \|x-y_k\| - \|b- y_k\|$$
and $\|y_k\| \rightarrow \infty$.  By passing to a subsequence if necessary, let $z = \lim_{k \rightarrow \infty} y_k/\|y_k\|$. The equation 
\begin{align*}
\left\|x + t\frac{y_k-x}{\|y_k-x\|} - y_k \right\| - \|b-y_k\| &= \| x - y_k \| \left( 1 - \frac{t}{\|x-y_k\|} \right) - \|b-y_k\| \\ 
&= \|x - y_k \| - \|b-y_k \| - t
\end{align*}
becomes equation \eqref{eq:horopath} in the limit as $k \rightarrow \infty$.  

Now suppose that $\{f(w_i)-w_i\colon i=1,\ldots,m\}$ illuminates the unit ball $B_1(0)$ in $V$. Since $z \in \partial B_1(0)$, there is some $i \in \{1,\ldots,m\}$ such that $f(w_i)-w_i$ illuminates $z$.  Therefore $z+\epsilon(f(w_i)-w_i) \in \inter B_1(0)$ for some sufficiently small $\epsilon>0$.  By translation, $z+w_i + \epsilon(f(w_i)-w_i) \in \inter B_1(w_i)$.  

As the pointwise limit of a sequence of convex, Lipschitz 1 functions, $h$ must be convex and Lipschitz 1 itself. Since $z+w_i + \epsilon(f(w_i)-w_i) \in \inter B_1(w_i)$, it follows that 
$$h(z+ \epsilon f(w_i) + (1-\epsilon)w_i) > h(w_i)-1  = h(z+w_i)$$
by equation \eqref{eq:horopath}. By convexity, we obtain from the preceeding inequality that 
$$(1-\epsilon)h(z+w_i) + \epsilon h(z+f(w_i)) > h(z+w_i),$$
which immediately gives $h(z+f(w_i)) > h(z+w_i)$.  
Then by equation \eqref{eq:horopath}, $h(f(w_i)) > h(w_i)$.
\end{proof}

Theorem \ref{thm:main} has the following interesting consequence concerning the space $N(V,\|\cdot\|)$ consisting of all nonexpansive maps on  a finite dimensional normed space $(V,\|\cdot\|)$.
\begin{corollary}\label{cor:generic} 
The subset of $N(V,\|\cdot\|)$ consisting of those nonexpansive maps $f\colon V\to V$  with $\Fix(f)$ nonempty and bounded, is open and dense in the topology of compact convergence on $N(V,\|\cdot\|)$. Moreover, if $f\in N(V,\|\cdot\|)$ is such that $\Fix(f)$ is unbounded, then for each $\delta>0$ there exists $g\in N(V,\|\cdot\|)$ such that $\Fix(g)$ is empty and 
\[
\sup_{x\in V} \|f(x)-g(x)\|\leq \delta.
\]
\end{corollary}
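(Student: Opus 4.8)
The plan is to deduce Corollary~\ref{cor:generic} directly from the equivalences in Theorem~\ref{thm:main}, especially the equivalence of (1) and (4), since (4) is the condition most amenable to perturbation arguments. For \emph{openness}: suppose $f\in N(V,\|\cdot\|)$ has $\Fix(f)$ nonempty and bounded. By Theorem~\ref{thm:main}(4) there are $w_1,\dots,w_m$ such that $S(f):=\{f(w_i)-w_i : i=1,\dots,m\}$ illuminates $B_1(0)$. Illumination is an open condition in the following sense: for each $w\in\partial B_1(0)$, pick $i=i(w)$ and $\lambda_w>0$ with $w+\lambda_w s_{i(w)}\in\inter B_1(0)$, where $s_i=f(w_i)-w_i$; by continuity this persists for $w'$ near $w$ and for vectors $s_i'$ near $s_i$, and a compactness argument on $\partial B_1(0)$ produces a uniform $\rho>0$ so that any set $\{s_1',\dots,s_m'\}$ with $\|s_i'-s_i\|<\rho$ for all $i$ still illuminates $B_1(0)$. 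Now if $g\in N(V,\|\cdot\|)$ is close to $f$ in the topology of compact convergence, then in particular $g$ is uniformly close to $f$ on the finite set $\{w_1,\dots,w_m\}$, so $\|(g(w_i)-w_i)-(f(w_i)-w_i)\|<\rho$ for each $i$, hence $\{g(w_i)-w_i\}$ illuminates $B_1(0)$, and Theorem~\ref{thm:main}(4)$\Rightarrow$(1) gives that $\Fix(g)$ is nonempty and bounded. This shows the set in question is open; note the relevant basic open neighborhoods of $f$ are exactly those controlled by the values of $g$ on the compact set $\{w_1,\dots,w_m\}$.

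For \emph{density}, together with the final ``moreover'' claim, it suffices to show that every $f\in N(V,\|\cdot\|)$ can be approximated within an arbitrary $\delta>0$ in the sup-metric $\sup_{x\in V}\|f(x)-g(x)\|$ by some $g\in N(V,\|\cdot\|)$ with $\Fix(g)$ empty; this simultaneously handles density (since $\delta$ is arbitrary and such $g$ certainly does not have nonempty bounded fixed point set, so we then perturb $g$ a tiny bit further, or better, we directly produce $g$ with $\Fix(g)$ nonempty bounded) and the explicit statement about the unbounded case. The natural candidate is a small translate: fix any nonzero $v\in V$ and set $g(x):=f(x)+\frac{\delta}{\|v\|}\,v$. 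Then $g$ is nonexpansive (translations are isometries), $\sup_x\|f(x)-g(x)\|=\delta$, and I would argue that for a generic choice of direction $v$, $\Fix(g)=\emptyset$: if $g(x)=x$ for some $x$, then $f(x)-x=-\frac{\delta}{\|v\|}v$, i.e.\ the vector $-\frac{\delta}{\|v\|}v$ lies in the range of $\id-f$ --- but we must rule this out. Here is the subtlety, and the main obstacle: the range of $\id - f$ need not be a proper subset of $V$ (indeed when $\Fix(f)$ is nonempty and bounded, Theorem~\ref{thm:main}(3) says it hits every direction), so a pure translation cannot always work. The fix is to use the hypothesis: translation works exactly when $\Fix(f)$ fails to be nonempty-and-bounded, which via (1)$\Leftrightarrow$(3) means there is a direction $y\in V\setminus\{0\}$ with no $w,\lambda>0$ solving $w-f(w)=\lambda y$; then $g(x):=f(x)+\delta' y$ (for suitable small $\delta'$, rescaling $y$) has no fixed point, giving the ``moreover'' statement. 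For density when $\Fix(f)$ \emph{is} nonempty and bounded there is nothing to prove, and when it is empty or unbounded we have just produced arbitrarily close $g$ with $\Fix(g)=\emptyset$; to land inside the target \emph{open} set one then applies a further arbitrarily small perturbation of $g$ into a map with nonempty bounded fixed point set, e.g.\ by composing with a slight contraction toward a point as in the construction $f_n=(1-\tfrac1n)f$ used to verify property (A1), whose fixed point set is a single point.

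So the key steps, in order, are: (i) extract an illuminating set $S(f)$ from Theorem~\ref{thm:main}(4); (ii) prove a uniform robustness lemma --- there is $\rho>0$ such that any $\rho$-perturbation of $S(f)$ still illuminates $B_1(0)$ --- via compactness of $\partial B_1(0)$; (iii) conclude openness by noting compact convergence controls $g$ on the finite set $\{w_i\}$; (iv) for the ``moreover'' statement, invoke (1)$\Leftrightarrow$(3) to get a ``missing direction'' $y$ and use the translate $g=f+\delta' y$ to kill all fixed points while staying within $\delta$ in sup-norm; (v) assemble density from (iv) plus a final small contraction-type perturbation into the open set. The main obstacle is step~(iv): one cannot translate in an arbitrary direction, so the argument genuinely needs the equivalence (1)$\Leftrightarrow$(3) (equivalently (5), via horofunctions) to locate a direction in which the map $\id-f$ is ``deficient''; checking that $g=f+\delta' y$ then has empty fixed point set amounts to verifying $-\delta' y$ stays outside the range of $\id-f$, which follows because that range, being ${\lambda y : \lambda \ge 0\}}$-avoiding by choice of $y$, in particular avoids negative multiples of $y$ after possibly replacing $y$ by $-y$ --- a point that needs a careful but short argument about which half-line is missed.
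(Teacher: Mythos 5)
Your openness argument is correct and takes a genuinely different route than the paper's. The paper proves openness via degree theory: for $g$ close enough to $f$ on $\partial B_R(0)$, a homotopy gives $\deg(\id-g,\inter B_R(0),0)=1$, hence $g$ has a fixed point in $B_R(0)$, and then Bruck's theorem on connectedness of $\Fix(g)$ is invoked to force $\Fix(g)\subseteq B_R(0)$. You replace all of this with a robustness-of-illumination lemma: the set of $m$-tuples $(s_1,\dots,s_m)\in V^m$ that illuminate $B_1(0)$ is open (by continuity of $\|\cdot\|$ and compactness of $\partial B_1(0)$), so condition (4) of Theorem \ref{thm:main} is stable under perturbing $g$ at the finite set $\{w_1,\dots,w_m\}$ --- and that is exactly what the compact convergence topology controls. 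This avoids degree theory and Bruck's connectedness result entirely for the openness claim, at the modest cost of supplying the robustness lemma, whose compactness proof you sketch correctly.

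The density argument and the ``moreover'' argument are essentially the paper's, but your presentation of each has an avoidable wrinkle. For density, there is no need to first approximate by a map with \emph{empty} fixed point set and then perturb again: the paper simply observes that $f_k:=(1-\tfrac1k)f$ is a Lipschitz contraction with a unique fixed point and $f_k\to f$ in compact convergence, which handles every $f$ at once (and your own final sentence falls back on exactly this). For the ``moreover'' step, you introduce a spurious sign worry about whether to replace $y$ by $-y$ and which half-line is missed. None is needed: Theorem \ref{thm:main}(3) failing produces $y\ne 0$ with $w-f(w)\ne\lambda y$ for all $w\in V$ and all $\lambda>0$; setting $g(x):=f(x)+\delta y$ then has $\sup_x\|f(x)-g(x)\|=\delta\|y\|$ (so you should normalize $y$ or absorb $\|y\|$ into $\delta$, a detail you should make explicit), and $g(x^*)=x^*$ would force $x^*-f(x^*)=\delta y$ with $\delta>0$, which is excluded. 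There is no sign issue to resolve, and the range of $\id-f$ need not avoid any negative multiples of $y$ for the argument to work.
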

\begin{proof}
If $\Fix(f) \subset \inter B_R(0)$ is nonempty, then $\deg(\id-f, \inter B_R(0), 0)=1$ by Proposition \ref{prop:2.5}. Select $\epsilon >0$ such that $\min\{\|x-f(x)\|\colon x\in\partial B_R(0)\}\ge\epsilon$ and consider the neighborhood 
\[
U:=\Big{\{}h\in N(V,\|\cdot\|)\colon \sup_{x\in\partial B_R(0)} \|f(x)-h(x)\|<\epsilon/2\Big{\}}
\]
of $f$ in the topology of compact convergence. Let $g\in U$ and define the homotopy $g_t(x):=tg(x) +(1-t)f(x)$ for $t\in [0,1]$ and $x\in B_R(0)$. Then for $x\in\partial B_R(0)$ we  have that 
\[
\|x-g_t(x)\|\geq \|x-f(x)\|-\|f(x)-g_t(x)\|\geq \epsilon -t\|f(x)-g(x)\|>\epsilon/2 >0
\] 
for all $t\in[0,1]$. So, by (D3) we get that $\deg(\id-g, \Int B_R(0),0)=1$, and hence (D1) implies  that $g$ has a fixed point in $\Int B_R(0)$. As $\Fix(g)$ is connected, see \cite[Theorems 2 and 3]{Bruck}, and $g$ has no fixed points in $\partial B_R(0)$, we conclude that $\Fix(g)\subseteq B_R(0)$, which shows that
$$\{ f\in N(V,\|\cdot\|)\colon \Fix(f)\mbox{ nonempty and bounded}\}$$ 
is open. To show that it is dense let $f\in N(V,\|\cdot\|)$ and recall that the topology of compact convergence has a basis of open sets,  
\[
U(g,A,\epsilon):=\{h\in N(V,\|\cdot\|)\colon \sup_{x\in A}\|h(x)-g(x)\|<\epsilon\},
\]
where $g\in N(V,\|\cdot\|)$, $A\subseteq V$ compact, and $\epsilon >0$. Let $f_k(x):=(1-1/k)f(x)$ for $k>1$. Note that $f_k$ is a Lipschitz contraction on $V$, and hence $f_k$ has a unique fixed point. Moreover, for each neighborhood 
$U(f,A,\epsilon):=\{h\in N(V,\|\cdot\|)\colon \sup_{x\in A}\|h(x)-f(x)\|<\epsilon\}$ of $f$ we have that $f_k\in U(f,A,\epsilon)$ for all $k>1$ sufficiently large. 
This completes the proof of the first part of the corollary. 

Now suppose that $\Fix(f)$ is unbounded. By Theorem \ref{thm:main}(3), there exists $y \in V$, $y \neq 0$, such that $w-f(w) \neq \lambda y$ for all $w \in V$ and $\lambda > 0$. Fix some $\delta>0$ and define $g\colon V\to V$ by $g(x):=f(x)+\delta y$ for all $x\in V$. Then $\|g(x)-f(x)\|\leq \delta$ for all $x\in V$ and $\Fix(g)$ is empty, as otherwise there exists $x^*\in V$ with $x^*-f(x^*) =\delta y$, which would be a contradiction. 
\end{proof}

\section{Detecting fixed points by illumination} \label{sec:norm}
Theorem \ref{thm:main} suggests the following test for detecting fixed points of nonexpansive maps $f\colon V\to V$ on finite dimensional normed spaces. Randomly generate a finite set of points $S$ in $V$. Subsequently check if $\{f(w)-w\colon w\in S\}$ illuminates the unit ball $B_1$. If so, $f$ must have a fixed point and its fixed point set is bounded. As we shall see in this section there are many classes of norms, such as smooth norms and polyhedral norms, for which one can find computational criteria to check whether  $\{f(w)-w\colon w\in S\}$ illuminates the unit ball $B_1$. 

We start with the following basic observation. 
\begin{lemma}\label{lem:4.1} Let $K\subseteq V$ be a compact, convex set with nonempty interior. If $x,y\in\partial K$ are such that the line 
segment $\{tx+(1-t)y\colon 0\leq t\leq 1\}\subseteq \partial K$, and $x$ is illuminated by $v$, then $v$ illuminates $tx+(1-t)y$ for all $0< t\leq 1$. Moreover, if $S$ illuminates every extreme point of $K$, then $S$ illuminates $K$.
\end{lemma}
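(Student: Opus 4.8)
The plan is to establish the segment statement first and then deduce the claim about extreme points from it. For the segment statement, fix $t\in(0,1]$ and put $z:=tx+(1-t)y$, which lies in $\partial K$ by hypothesis. Since $v$ illuminates $x$, choose $\lambda>0$ with $x+\lambda v\in\inter K$. Using the standard fact that $sa+(1-s)b\in\inter K$ whenever $a\in\inter K$, $b\in K$ and $s\in(0,1]$ (a ball around $a$ can be absorbed into the combination, since $b\in K=\cl K$), applied with $a=x+\lambda v$, $b=y$, $s=t$, we obtain
\[
t(x+\lambda v)+(1-t)y \;=\; z+(t\lambda)v \;\in\;\inter K,
\]
and since $t\lambda>0$ this is exactly the statement that $v$ illuminates $z$. (The case $t=1$ is the hypothesis itself.)

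For the ``moreover'' part, let $z\in\partial K$ be arbitrary; I must produce some $v\in S$ illuminating $z$. By Minkowski's theorem (in finite dimensions a compact convex set is the convex hull of its extreme points) write $z=\sum_{i=1}^{k}\mu_i e_i$ with each $e_i$ an extreme point of $K$, all $\mu_i>0$, and $\sum_i\mu_i=1$. If $k=1$, then $z=e_1$ is an extreme point and is illuminated by some $v\in S$ by hypothesis. If $k\ge 2$, choose a nonzero linear functional $\phi$ supporting $K$ at $z$, so $\phi(w)\le\phi(z)$ for all $w\in K$ (such $\phi$ exists because $z\in\partial K$ and $K$ is convex with nonempty interior). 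From $\phi(z)=\sum_i\mu_i\phi(e_i)$ with $\mu_i>0$ and $\phi(e_i)\le\phi(z)$ we conclude $\phi(e_i)=\phi(z)$ for every $i$. Hence $e_1$, the point $y:=(1-\mu_1)^{-1}\sum_{i\ge 2}\mu_i e_i\in K$, and the whole segment $[e_1,y]$ all lie in $\{w\in K:\phi(w)=\phi(z)\}\subseteq\partial K$. Since $z=\mu_1 e_1+(1-\mu_1)y$ with $\mu_1\in(0,1)$, the segment $[e_1,y]$ is contained in $\partial K$, and $e_1$ is illuminated by some $v\in S$, the first part of the lemma applies and shows that $v$ illuminates $z$. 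Therefore $S$ illuminates $K$.

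The segment statement is routine convex geometry. The point that needs care in the second part is that Minkowski's theorem only presents $z$ as a convex combination of extreme points, and a priori the aggregated point $y$, or the segment $[e_1,y]$, could poke into $\inter K$ — in which case the first part would not apply. The supporting-functional computation is precisely what rules this out: it forces every extreme point occurring in the decomposition onto a single supporting hyperplane, so $[e_1,y]$ stays on $\partial K$. Once that is observed, the conclusion follows immediately from the segment statement.
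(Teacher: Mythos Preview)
Your proof is correct and follows the same line as the paper's. For the segment statement your argument is identical to the paper's; for the ``moreover'' part the paper simply cites that every boundary point is a convex combination of extreme points and leaves the rest to the reader, while you supply the details via a supporting hyperplane. A marginally shorter way to see that $[e_1,y]\subseteq\partial K$ is to note that the segment contains $z\in\partial K$ strictly in its interior, and any point of $(e_1,y)$ lying in $\inter K$ would force $z\in\inter K$ by convexity---no supporting functional needed---but your argument is perfectly valid.
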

\begin{proof}
If $v$ illuminates $x$, then $x+\lambda v\in\Int K$ for all $\lambda >0$ sufficiently small. By convexity of $K$ we find that $tx+(1-t)y+t\lambda v=t(x+\lambda v)+(1-t)y\in\Int K$ for all $0<t\leq 1$. 
The second assertion now follows from the fact that each $x\in\partial K$ is a convex combination of extreme points of $K$, see \cite[Corollary 18.5.1]{Rockafellar}. 
\end{proof}

For norms with a polyhedral unit ball it is easy to check whether the extreme points of the unit ball are illuminated, which is sufficient by Lemma \ref{lem:4.1}. This leads to the following simple criterion in the case of the \textit{supremum norm} $\|x\|_\infty := \max_{1 \le i \le n} |x_i|$ on $\R^n$.   
\begin{proposition}\label{prop:supNorm} Let $f\colon\R^n\to\R^n$ be nonexpansive with respect to the supremum norm. $\Fix(f)$ is nonempty and bounded if and only if for each $J\subseteq \{1,\ldots,n\}$ there exists $w\in S$ such that 
$f(w)_j<w_j$ for all $j\in J$ and $f(w)_j>w_j$ for all $j\not\in J$.
\end{proposition}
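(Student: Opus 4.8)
The plan is to deduce this from the equivalence $(1)\Leftrightarrow(4)$ of Theorem \ref{thm:main} together with Lemma \ref{lem:4.1}, by making explicit what it means to illuminate the unit ball $B_1(0)=[-1,1]^n$ of $(\R^n,\|\cdot\|_\infty)$. First I would recall that the extreme points of the cube $[-1,1]^n$ are precisely its $2^n$ vertices, that is, the sign vectors $\sigma=(\sigma_1,\ldots,\sigma_n)$ with each $\sigma_j\in\{-1,1\}$. By Lemma \ref{lem:4.1} a set $S\subseteq\R^n$ illuminates $B_1(0)$ if and only if it illuminates each of these vertices, the ``only if'' being immediate since the vertices lie in $\partial B_1(0)$.

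Next I would determine when a single vector $v\in\R^n$ illuminates a vertex $\sigma$. The requirement $\sigma+\lambda v\in\Int B_1(0)=(-1,1)^n$ for all sufficiently small $\lambda>0$ unwinds coordinatewise: if $\sigma_j=1$ one needs $1+\lambda v_j<1$, i.e.\ $v_j<0$; if $\sigma_j=-1$ one needs $-1+\lambda v_j>-1$, i.e.\ $v_j>0$; in either case the other inequality holds automatically for small $\lambda$. Hence $v$ illuminates the vertex $\sigma$ exactly when $\sigma_j v_j<0$ for every $j$, equivalently $\sign(v_j)=-\sigma_j$ for all $j$.

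Then I would set $v=f(w)-w$ and translate. A vertex $\sigma$ is illuminated by $f(w)-w$ iff $\sign(f(w)_j-w_j)=-\sigma_j$ for all $j$, i.e.\ $f(w)_j<w_j$ whenever $\sigma_j=1$ and $f(w)_j>w_j$ whenever $\sigma_j=-1$. Writing $J=\{j:\sigma_j=1\}$ and letting $\sigma$ range over all vertices of the cube, the statement ``$\{f(w)-w:w\in S\}$ illuminates $B_1(0)$'' becomes exactly ``for each $J\subseteq\{1,\ldots,n\}$ there is $w$ with $f(w)_j<w_j$ for all $j\in J$ and $f(w)_j>w_j$ for all $j\notin J$.'' Collecting one witness $w$ for each of the (at most $2^n$) subsets $J$ produces a finite illuminating set, and conversely any finite illuminating set yields such witnesses; so by Theorem \ref{thm:main} this condition holds if and only if $\Fix(f)$ is nonempty and bounded.

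I do not expect a real obstacle here: the argument is essentially a dictionary between vertices of the cube, coordinatewise sign patterns of $f(w)-w$, and subsets $J$. The only point needing care is bookkeeping with the direction of the inequalities, namely that illumination pushes a vertex coordinate equal to $+1$ in the negative direction and a coordinate equal to $-1$ in the positive direction, together with the observation that the finitely many witnesses assembled over all $J$ constitute the finite set required in the statement of Theorem \ref{thm:main}.
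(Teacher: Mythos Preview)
Your proposal is correct and follows essentially the same approach as the paper: identify the extreme points of the $\ell_\infty$ unit ball as the sign vectors (equivalently, indexed by subsets $J\subseteq\{1,\ldots,n\}$), observe that $v$ illuminates such a vertex precisely when $v_j<0$ for $j\in J$ and $v_j>0$ for $j\notin J$, and then invoke Lemma~\ref{lem:4.1} together with the equivalence $(1)\Leftrightarrow(4)$ of Theorem~\ref{thm:main}. The paper's proof is just a terser version of exactly this argument.
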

\begin{proof}
The extreme points of the unit ball in $(\R^n,\|\cdot\|_\infty)$ are the points $z^J$ where $J \subseteq \{1,\ldots,n\}$ and $z^J_j = 1$ if $j \in J$ and $z^J_j = -1$ otherwise. 
Clearly $z^J+\lambda v\in \Int B_1$ if and only if $v_j<0$ for all $j\in J$ and $v_j>0$ for all $j\not\in J$. Thus, $\{f(w)-w\colon w\in S\}$ illuminates $B_1$ if and only if for each $J\subseteq \{1,\ldots,n\}$ there exists $w\in S$ such that $f(w)_j<w_j$ for all $j\in J$ and $f(w)_j>w_j$ for all $j\not\in J$. Therefore the result follows from Theorem \ref{thm:main}.
\end{proof}

The following necessary condition for a set to illuminate the unit ball is also sufficient for smooth norms.  The observation is closely related to known results, see \cite[Corollary 35.3]{BoltyanskiMartiniHorst}.  
\begin{lemma} \label{lem:smoothIllum}
Let $(V,\|\cdot\|)$ be a finite dimensional normed space.  If $\{v_1,\ldots,v_m\} \subset V$ illuminates the unit ball $B_1$, then $0 \in \inter \conv \{v_1, \ldots, v_m \}$.  If $\|\cdot \|$ is a smooth norm, the converse also holds. That is, $0 \in \inter \conv \{v_1, \ldots, v_m \}$ implies that $\{v_1,\ldots,v_m \}$ illuminates $B_1$.  
\end{lemma}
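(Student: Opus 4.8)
The plan is to handle the two directions separately. For the forward (necessity) direction, I would argue by contraposition: suppose $0 \notin \inter \conv\{v_1,\ldots,v_m\}$. Then by a separation argument there is a nonzero functional $\phi \in V^*$ with $\phi(v_i) \ge 0$ for all $i$ (if $0$ is outside the convex hull, strict separation gives $\phi(v_i) > c > 0$; if $0$ lies on the boundary, a supporting hyperplane gives $\phi(v_i) \ge 0$). Pick $z \in \partial B_1$ with $\phi(z) = \|\phi\|_* > 0$, i.e.\ a point where $\phi$ attains its norm on the unit ball. For such $z$ and any $v_i$, convexity of the norm (or smoothness-free support-functional reasoning) gives $\|z + \lambda v_i\| \ge \phi(z + \lambda v_i)/\|\phi\|_* = 1 + \lambda\phi(v_i)/\|\phi\|_* \ge 1$ for all $\lambda > 0$, so $z + \lambda v_i \notin \inter B_1$. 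Hence no $v_i$ illuminates $z$, and $\{v_1,\ldots,v_m\}$ does not illuminate $B_1$. This direction is short and does not use smoothness.

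For the converse (sufficiency under smoothness), assume $0 \in \inter \conv\{v_1,\ldots,v_m\}$ and let $z \in \partial B_1$ be arbitrary. Since the norm is smooth, there is a unique $\phi \in V^*$ with $\|\phi\|_* = 1$ and $\phi(z) = 1$, and this $\phi$ is the unique supporting functional of $B_1$ at $z$. Because $0$ is in the interior of $\conv\{v_1,\ldots,v_m\}$, the functional $-\phi$ cannot be nonnegative on all the $v_i$ (otherwise $\phi$ would be $\le 0$ on the whole convex hull, contradicting $0 \in \inter$), so there exists $i$ with $\phi(v_i) < 0$. I then claim this $v_i$ illuminates $z$: I want $\|z + \lambda v_i\| < 1$ for small $\lambda > 0$. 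The key estimate is the one-sided directional derivative of the norm at $z$ in direction $v_i$, which for a smooth norm equals $\phi(v_i)$; explicitly, $\frac{d}{d\lambda}\big|_{\lambda = 0^+}\|z + \lambda v_i\| = \phi(v_i) < 0$, so $\|z + \lambda v_i\| < 1$ for all sufficiently small $\lambda > 0$, i.e.\ $z + \lambda v_i \in \inter B_1$. Since $z$ was an arbitrary boundary point, $\{v_1,\ldots,v_m\}$ illuminates $B_1$.

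The main obstacle, and the only place where smoothness is essential, is justifying that the one-sided derivative of the norm at a boundary point $z$ in a direction $v$ equals $\phi(v)$ where $\phi$ is a supporting functional at $z$. For a general (non-smooth) norm one only gets $\frac{d}{d\lambda}\big|_{0^+}\|z+\lambda v\| = \max\{\psi(v) : \|\psi\|_* = 1,\ \psi(z)=1\}$ — the max over the (possibly multi-valued) subdifferential — so $\phi(v) < 0$ for one supporting functional is not enough to decrease the norm. Smoothness makes this subdifferential a singleton $\{\phi\}$, so the directional derivative is exactly $\phi(v)$ and the argument closes. I would state this subdifferential formula as a standard fact about convex functions (citing, e.g., Rockafellar \cite{Rockafellar}) rather than reprove it, and note that smoothness of the norm is precisely the condition that the subdifferential of $\|\cdot\|$ at each unit vector is a singleton. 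It may also be worth remarking, as the lemma's statement hints via \cite[Corollary 35.3]{BoltyanskiMartiniHorst}, that the converse can genuinely fail without smoothness — the supremum norm example behind Proposition \ref{prop:supNorm} shows that $0 \in \inter\conv\{v_i\}$ is weaker than illumination for polyhedral balls.
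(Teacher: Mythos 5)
Your proof is correct, and the forward (necessity) direction is essentially the paper's argument verbatim: separate $0$ from $\conv\{v_1,\ldots,v_m\}$ to get a norm-one $\phi$ with $\phi(v_i)\ge 0$, choose $z$ with $\phi(z)=1$, and observe $\|z+\lambda v_i\|\ge\phi(z+\lambda v_i)\ge 1$. For the sufficiency direction you take a genuinely different route. The paper argues by contradiction with a second application of Hahn--Banach: if $v_i$ with $\phi(v_i)<0$ fails to illuminate $z$, then the whole line $\{z+tv_i : t\in\R\}$ misses $\inter B_1$, so there is a supporting functional $\psi$ of $B_1$ at $z$ that is constant on the line; by smoothness $\psi=\phi$, which forces $\phi(v_i)=0$, a contradiction. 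You instead invoke the convex-analytic identity that the one-sided directional derivative of $\|\cdot\|$ at $z$ in direction $v$ is $\max\{\psi(v):\psi\in\partial\|\cdot\|(z)\}$, which for a smooth norm collapses to $\phi(v)$, so $\phi(v_i)<0$ immediately gives $\|z+\lambda v_i\|<1$ for small $\lambda>0$. Both approaches exploit the same geometric fact (the subdifferential at $z$ is a singleton); yours is shorter once you grant the subdifferential identity from convex analysis, while the paper's is self-contained and uses nothing beyond Hahn--Banach.

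One small slip to fix: you write that ``the functional $-\phi$ cannot be nonnegative on all the $v_i$,'' but negating that statement yields some $i$ with $\phi(v_i)>0$, not $\phi(v_i)<0$ as you need. The intended claim is that $\phi$ itself cannot be nonnegative on all the $v_i$: if $\phi(v_i)\ge 0$ for every $i$, then $\phi\ge 0$ on $\conv\{v_1,\ldots,v_m\}$, so $0$ would be a minimizer of the nonzero linear functional $\phi$ over a set containing $0$ in its interior, which is impossible. (Equivalently, as the paper does, $-\epsilon z$ lies in the hull for small $\epsilon>0$, yet $\phi(-\epsilon z)=-\epsilon<0$.) Everything downstream of this sign is correct.
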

\begin{proof}
Suppose that $0 \notin \inter \conv \{v_1, \ldots, v_m \}$.  By the Hahn-Banach theorem, there is a linear functional $\phi \in V^*$, $\|\phi\|_* = 1$ such that $\phi(v_i) \ge 0$ for all $1 \le i \le m$.  There exists $z \in \partial B_1$ such that $\phi(z) = 1$.  Note that $\phi(z+\epsilon v_i) \ge \phi(z)$ for all $\epsilon > 0$ and $1 \le i \le m$.  Therefore $B_1$ is not illuminated by $\{v_1,\ldots,v_m\}$.  

Now suppose that $\|\cdot \|$ is a smooth norm and $0 \in \inter \conv \{v_1,\ldots, v_m \}$.  Choose $z \in \partial B_1$. By the smoothness of $\|\cdot\|$, there exists a unique $\phi \in V^*$, $\|\phi\|_*=1$, such that $\phi(z) = 1$.  For $\epsilon > 0$ sufficiently small, $-\epsilon z \in \inter \conv \{v_1, \ldots, v_m \}$.  Then 
$$-\epsilon = \phi(-\epsilon z) = \lambda_1 \phi(v_1) + \ldots + \lambda_m \phi(v_m).$$
This means that $\phi(v_i) < 0$ for some $i \in \{1, \ldots, m\}$.  Now consider the line $l = \{ z + t v_i \colon t \in \R \}$.  Note that $\|z+tv_i \| \ge \phi(z+tv_i) > 1$ for all $t < 0$.  If $\|z+tv_i \| < 1$ for some $t > 0$, then $v_i$ illuminates $z$.  Suppose that is not the case.  Then, by the Hahn-Banach theorem, there is a linear functional $\psi \in V^*$, $\|\psi\|_* = 1$, such that $\psi(z+tv_i) \ge \psi(x)$ for all $t \in \R$ and $x \in B_1$.  In particular, $\psi(z) = 1$, which implies that $\psi = \phi$ by the uniqueness of $\phi$. Then $\psi(z+tv_i) \ge 1$ for all $t$, and therefore $\psi(v_i) = 0$, a contradiction.  
\end{proof}

\begin{remark}
It is possible to determine whether $0 \in \inter \conv \{v_1, \ldots, v_m\}$ in polynomial time (in both $m$ and the dimension of $V$) using linear programming. Solve the linear program:

\begin{tabular}{ll}
\textbf{maximize} & $\epsilon \in \R$ \\
\textbf{subject to} & $\lambda_i - \epsilon \ge 0$ for all $i \in \{1,\ldots,m\}$, \\
\textbf{and} & $\sum_{i = 1}^m \lambda_i v_i = 0$, $\sum_{i = 1}^m \lambda_i = 1.$ 
\end{tabular}  \\
Then $0$ is in the relative interior of $\conv \{v_1, \ldots, v_m \}$ if and only if the maximum $\epsilon$ is positive.  Furthermore, when $0 \in \conv \{v_1,\ldots,v_m\}$, it follows that $0$ is in the affine hull of $\{v_1,\ldots, v_m\}$. This implies that the affine hull of $\{v_1, \ldots, v_m \}$ is the same as the span of $\{v_1, \ldots, v_m\}$ by \cite[Theorem 1.1]{Rockafellar}.  In particular, the relative interior of $\conv \{v_1,\ldots,v_m\}$ is the same as the interior when $\operatorname{span} \{v_1, \ldots, v_m \} = V$.  This can be checked quickly using a rank computation.
\end{remark}

Combining Lemma \ref{lem:smoothIllum} with Theorem \ref{thm:main} immediately gives the following result.  

\begin{corollary}\label{cor:smoothNorms} 
Let $f\colon V\to V$ be a nonexpansive map on a finite dimensional smooth normed space $(V,\|\cdot\|)$. There exist $w_1,\ldots,w_m\in V$ such that 
$$0\in \Int \conv\{f(w_i)-w_i\colon i=1,\ldots,m\}$$
if and only if $\Fix(f)$ is nonempty and bounded. 
\end{corollary}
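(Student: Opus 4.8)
The plan is to obtain the corollary by simply chaining together the equivalence ``(1) $\Leftrightarrow$ (4)'' of Theorem \ref{thm:main} with the smooth-norm characterization of illumination furnished by Lemma \ref{lem:smoothIllum}; no new estimates are needed.

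First I would dispatch the direction that uses smoothness. Suppose there exist $w_1,\ldots,w_m\in V$ with $0\in \Int\conv\{f(w_i)-w_i\colon i=1,\ldots,m\}$, and set $v_i:=f(w_i)-w_i$. Since $(V,\|\cdot\|)$ is smooth, the second (converse) assertion of Lemma \ref{lem:smoothIllum} applies and yields that $\{v_1,\ldots,v_m\}$ illuminates the unit ball $B_1$. Hence statement (4) of Theorem \ref{thm:main} holds, and therefore statement (1) holds, i.e. $\Fix(f)$ is nonempty and bounded.

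For the reverse direction I would use only the norm-independent half of the lemma. Assume $\Fix(f)$ is nonempty and bounded. By Theorem \ref{thm:main}, statement (4) holds: there are $w_1,\ldots,w_m\in V$ such that $S:=\{f(w_i)-w_i\colon i=1,\ldots,m\}$ illuminates $B_1$. The first assertion of Lemma \ref{lem:smoothIllum} then gives $0\in\Int\conv S$, which is exactly $0\in\Int\conv\{f(w_i)-w_i\colon i=1,\ldots,m\}$. Combining the two directions finishes the proof. Since both Theorem \ref{thm:main} and Lemma \ref{lem:smoothIllum} are already in hand, there is no real obstacle here; the only point worth flagging explicitly is that smoothness of the norm enters only through the ``$0\in\Int\conv\{v_i\}\Rightarrow$ illumination'' implication, so that the hypothesis cannot be dropped in the ``if'' direction of the corollary, while the ``only if'' direction is valid for every norm.
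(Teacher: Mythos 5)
Your proposal is correct and matches the paper's approach exactly: the paper derives Corollary \ref{cor:smoothNorms} immediately by combining Lemma \ref{lem:smoothIllum} with Theorem \ref{thm:main}, just as you do. The observation that smoothness is only needed for the implication ``$0\in\Int\conv\{v_i\}\Rightarrow$ illumination'' is also accurate.
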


For arbitrary norms the following result can be used to verify the existence of a bounded set of fixed points.  
\begin{proposition}
Let $f\colon V\to V$ be a nonexpansive map on a finite dimensional normed space $(V,\|\cdot\|)$.  If there is a finite collection of vectors $w_i \in V$ with corresponding $v_i := \frac{w_i-f(w_i)}{||w_i-f(w_i)||}$ such that the interior of the balls $B_1(v_i)$ cover $\partial B_1(0)$, then $\Fix(f)$ is nonempty and  bounded. 
\end{proposition}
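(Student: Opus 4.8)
The plan is to deduce this from Theorem~\ref{thm:main}: it suffices to show that the finite set $S := \{f(w_i)-w_i \colon i = 1,\ldots,m\}$ illuminates the unit ball $B_1(0)$, and then invoke the implication $(4)\Rightarrow(1)$ of that theorem. The first observation I would make is that each $v_i$ is well defined only if $w_i \neq f(w_i)$, which is implicit in the hypothesis, and that by the definition of $v_i$ we have
\[
f(w_i)-w_i = -\|w_i-f(w_i)\|\, v_i ,
\]
so $f(w_i)-w_i$ is a strictly negative scalar multiple of $v_i$; hence illuminating a point by $f(w_i)-w_i$ is equivalent to illuminating it by $-v_i$.

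The key step is the claim that if $z \in \partial B_1(0)$ satisfies $z \in \inter B_1(v_i)$ for some $i$, then $f(w_i)-w_i$ illuminates $z$. To prove it, fix such a $z$ and $i$, so that $\|z - v_i\| < 1$. For $\mu \in [0,1]$ write $z - \mu v_i = (1-\mu)z + \mu(z - v_i)$; by convexity of the norm,
\[
\|z-\mu v_i\| \le (1-\mu)\|z\| + \mu\|z-v_i\| = (1-\mu) + \mu\|z-v_i\| < 1
\]
for every $\mu \in (0,1]$, using $\|z\| = 1$ and $\|z-v_i\| < 1$. Thus $z - \mu v_i \in \inter B_1(0)$ for all $\mu \in (0,1]$, and choosing $\lambda > 0$ small enough that $\lambda\|w_i - f(w_i)\| \le 1$ gives $z + \lambda(f(w_i)-w_i) = z - \lambda\|w_i-f(w_i)\|\,v_i \in \inter B_1(0)$, which is precisely the statement that $f(w_i)-w_i$ illuminates $z$.

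Finally, since the sets $\inter B_1(v_i)$ cover $\partial B_1(0)$, every boundary point $z$ of $B_1(0)$ lies in some $\inter B_1(v_i)$ and is therefore illuminated by the corresponding element $f(w_i)-w_i \in S$; hence $S$ illuminates $B_1(0)$, and Theorem~\ref{thm:main} gives that $\Fix(f)$ is nonempty and bounded. I do not expect a real obstacle in this argument; the only point needing a line of care is the convexity estimate above, which is the concrete reason why a point on $\partial B_1(0)$ that belongs to a unit ball centered at $v_i$ can be pushed inward along $-v_i$. One could also phrase the claim as a general fact about convex bodies, in the spirit of Lemma~\ref{lem:smoothIllum}, but the direct computation is shortest.
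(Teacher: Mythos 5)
Your proof is correct and follows essentially the same path as the paper's: reduce to the illumination criterion of Theorem~\ref{thm:main} and check that $f(w_i)-w_i$ illuminates any boundary point in $\inter B_1(v_i)$. The only difference is cosmetic: the paper restricts to extreme points and implicitly invokes Lemma~\ref{lem:4.1}, whereas you prove the convexity estimate $\|z-\mu v_i\|\le(1-\mu)+\mu\|z-v_i\|<1$ directly for every $z\in\partial B_1(0)$, which is cleaner and makes the appeal to Lemma~\ref{lem:4.1} unnecessary.
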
 
\begin{proof}
If $z$ is an extreme point of the unit ball $B_1(0)$, then $||z-v_i|| < 1$ for some $i$. Therefore $-v_i$ illuminates $z$.  Since every extreme point is illuminated by some $-v_i$, Theorem \ref{thm:main} implies that $f$ has a nonempty bounded set of fixed points.  
\end{proof}

It is worth asking whether condition (4) of Theorem \ref{thm:main} is optimal in the following sense: Suppose that the illumination number of the unit ball $B_1$ of $(V,\|\cdot\|)$ is $K$, and $v_1,\ldots,v_m\in V$ with $m<K$. Does there exist a nonexpansive map $f\colon V\to V$ and points $w_1,\ldots,w_m\in V$ with $v_i=f(w_i)-w_i$ for all $i$, such that $\Fix(f)$ is empty or unbounded?
We have the following partial results for this problem.
\begin{proposition}
If $v_1, \ldots, v_m$ are $m$ points in $\R^n$ and $m < 2^n$, then there exist a supremum-norm nonexpansive map $f\colon \R^n \rightarrow \R^n$ and  $w_i \in \R^n$ such that $v_i = f(w_i) -w_i$ for all $i$, and $\Fix(f)$ is  unbounded.
\end{proposition}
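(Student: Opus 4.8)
The plan is to realize $f$ as an extension of a partial assignment: I will prescribe the values of $f$ at finitely many points $w_1,\dots,w_m$ so that $f(w_i)-w_i=v_i$, and simultaneously declare an entire unbounded ray of $\R^n$ to consist of fixed points of $f$. The tool is the standard fact that $(\R^n,\|\cdot\|_\infty)$ is an injective (hyperconvex) metric space: any sup-norm nonexpansive map defined on a subset $A\subseteq\R^n$ extends to a sup-norm nonexpansive map on all of $\R^n$. Explicitly, if $p_\alpha\mapsto q_\alpha$, $\alpha\in A$, satisfies $\|q_\alpha-q_\beta\|_\infty\le\|p_\alpha-p_\beta\|_\infty$ for all $\alpha,\beta$, then the coordinate-wise McShane formula $f_j(x):=\inf_{\alpha\in A}\big((q_\alpha)_j+\|x-p_\alpha\|_\infty\big)$ defines such an extension: each $f_j$ is a sup-norm $1$-Lipschitz real function, so $f$ is sup-norm nonexpansive, and the compatibility inequalities force $f(p_\alpha)=q_\alpha$. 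Thus the whole problem reduces to choosing the $w_i$ and the ray so that every pairwise inequality $\|q_\alpha-q_\beta\|_\infty\le\|p_\alpha-p_\beta\|_\infty$ holds, and it is precisely here --- and only here --- that I use $m<2^n$.

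For the ray's direction: each $v_i$ lies in at most one open orthant $\{x\in\R^n:\rho_kx_k>0\text{ for all }k\}$, indexed by $\rho\in\{-1,1\}^n$. Since there are $2^n$ open orthants and only $m<2^n$ vectors $v_i$, some $\rho$ has an open orthant containing no $v_i$. Put $d:=-\rho$, so $\|d\|_\infty=1$; for each $i$ there is a $k$ with $\rho_k(v_i)_k\le0$, i.e.\ $d_k(v_i)_k\ge0$, hence $\beta_i:=\max_{1\le k\le n}d_k(v_i)_k\ge0$.

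For the points: let $w_i:=-v_i$, so that setting $f(w_i):=0$ yields $f(w_i)-w_i=v_i$, and let $f(sd):=sd$ for every $s\ge T$, where $T:=1+\max_i\|v_i\|_\infty$. The pairwise compatibility inequalities are now immediate: between two of the $w_i$ it reads $\|0-0\|_\infty=0\le\|w_i-w_j\|_\infty$; between two ray points it is the trivial equality (the prescribed map is the identity there); and between $w_i$ and a ray point $sd$ with $s\ge T$ one computes, using $|sd_k+(v_i)_k|=s+d_k(v_i)_k$ for every $k$ once $s>|(v_i)_k|$, that $\|w_i-sd\|_\infty=\|sd+v_i\|_\infty=s+\beta_i\ge s=\|0-sd\|_\infty$. (The infimum defining $f_j$ is finite at every $x$, since $sd_j+\|x-sd\|_\infty\ge-\|x\|_\infty$.) Hence the McShane extension $f$ exists, is sup-norm nonexpansive, satisfies $f(w_i)-w_i=v_i$ for all $i$, and fixes every point of the unbounded ray $\{sd:s\ge T\}$; in particular $\Fix(f)$ is unbounded, as required.

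The only real content is the simultaneous verification of all the compatibility inequalities, i.e.\ exhibiting one configuration of the $w_i$ together with one ray on which nothing conflicts: the choice $w_i=-v_i$ trivializes the compatibility among the $w_i$, while the pigeonhole over open orthants --- the sole place where $m<2^n$ is used --- supplies a direction $d$ along which an unbounded set can be frozen as fixed points without clashing with the prescribed displacements. Harmless variants: taking $w_i=-Lv_i$ with $L>1$ makes the $w_i$ distinct whenever the $v_i$ are, and one may replace the ray by the countable unbounded set $\{Td,(T+1)d,\dots\}$ to keep the prescribed set countable.
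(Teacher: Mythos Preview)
Your proof is correct and follows essentially the same approach as the paper: set $w_i=-v_i$ with $f(w_i)=0$, use pigeonhole over the $2^n$ sign patterns/orthants to find a direction $d$ along which the identity can be prescribed compatibly, and then extend via the hyperconvexity of $(\R^n,\|\cdot\|_\infty)$. You spell out the McShane extension formula and the compatibility inequality $\|w_i-sd\|_\infty=s+\beta_i\ge s$ more explicitly than the paper (which simply cites Aronszajn--Panitchpakdi), but the underlying argument is the same.
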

\begin{proof}
Suppose that $m < 2^n$.  For each $i$, let $w_i:=-v_i$ and  define $f(w_i) := 0$. So, $v_i = f(w_i)- w_i$ for all $i$.  Let $u := (1, \ldots, 1) \in \R^n$. Since $m < 2^n$, there must be one sign pattern in $\{-1,1\}^n$ that is not equal to the entry-wise sign pattern of any $v_i$. (Here take the sign of $0$ to be positive.) Without loss of generality, assume this is the all positive sign pattern. Define $f(c u) := cu$ for all $c \ge 0$. Since $||w_i - c u||_\infty \ge c$ and $||f(w_i) - cu||_\infty = c$ for all $i$, it follows that $f$ is nonexpansive under $||\cdot||_\infty$ on $\{w_i\colon i=1,\ldots,m\} \cup \{c u \colon c \ge 0 \}$. By a special case of the Aronszajn-Panitchpakdi theorem \cite{AronszajnPanitchpakdi}, the map $f$ extends to a supremum-norm nonexpansive map on all of $\R^n$.  
\end{proof}
For any compact, convex body $K$ in an $n$-dimensional space it is known that the  illumination number is at least $n+1$, see \cite[Theorem 35.1]{BoltyanskiMartiniHorst}. Note that this result also follows easily from Lemma \ref{lem:smoothIllum}.
So, if we are given  $v_1,\ldots,v_m$ in any $n$-dimensional normed space $V$ and $m<n+1$, we can ask if there exists a nonexpansive map $f\colon V\to V$ and points $w_1,\ldots,w_m\in V$ with $v_i=f(w_i)-w_i$ for all $i$, such that $\Fix(f)$ is empty or unbounded. The following result gives a positive answer to this question.
\begin{proposition} 
 If $v_1,\ldots,v_m$ are points in an $n$-dimensional normed space $(V,\|\cdot\|)$ and $m<n+1$, then there exist a nonexpansive map $f: V\to V$ and points $w_1,\ldots,w_m\in V$ such that $v_i = f(w_i)-w_i$ for all $i$, and $\Fix(f)$ is empty or unbounded.
\end{proposition}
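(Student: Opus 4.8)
The plan is to reduce the problem to a one-dimensional target space, where a nonexpansive extension is available via McShane's extension theorem for real-valued Lipschitz functions. This is where the argument must differ from the supremum-norm case treated just above: a general finite dimensional normed space is not an injective metric space, so one cannot simply extend a partially-defined nonexpansive map with values in $V$; but a real-valued $1$-Lipschitz function always extends.

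First I would produce a separating functional. Since $m<n+1$, the points $v_1,\dots,v_m$ lie in an affine subspace of $V$ of dimension at most $m-1\le n-1$, hence on some affine hyperplane $\{x\in V:\psi(x)=c\}$ with $\psi\in V^*$ nonzero; replacing $\psi$ by $-\psi$ if necessary we may assume $c\ge 0$, so that $\phi:=\psi/\|\psi\|_*$ has $\|\phi\|_*=1$ and $\phi(v_i)\ge 0$ for every $i$. Choose $z\in V$ with $\|z\|=1$ and $\phi(z)=1$, which is possible because the supremum defining $\|\phi\|_*$ is attained on the compact unit ball.

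Next I would build $f$. Set $w_i:=-v_i$, let $R:=\{tz:t\ge 0\}$ and $A:=R\cup\{w_1,\dots,w_m\}$, and define $\rho\colon A\to\R$ by $\rho(tz):=t$ for $t\ge 0$ and $\rho(w_i):=0$. One checks that $\rho$ is well defined (an overlap $w_i=tz$ forces $\phi(v_i)=-t$, hence $t=0$ and $v_i=0$, so the prescriptions agree; an overlap $w_i=w_j$ forces $v_i=v_j$) and that $\rho$ is $1$-Lipschitz on $A$; the only nontrivial pair is $tz$ (with $t\ge 0$) and $w_i$, where $|\rho(tz)-\rho(w_i)|=t$ while $\|tz-w_i\|=\|tz+v_i\|\ge\phi(tz+v_i)=t+\phi(v_i)\ge t$. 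Setting $\tilde\rho(x):=\inf_{a\in A}(\rho(a)+\|x-a\|)$, a standard computation shows $\tilde\rho\colon V\to\R$ is $1$-Lipschitz and restricts to $\rho$ on $A$. Finally put $f(x):=\tilde\rho(x)\,z$. Then $f$ is nonexpansive, since $\|f(x)-f(y)\|=|\tilde\rho(x)-\tilde\rho(y)|\,\|z\|\le\|x-y\|$; we have $f(w_i)=\rho(w_i)z=0$, so $f(w_i)-w_i=-w_i=v_i$ for all $i$; and $f(tz)=\rho(tz)z=tz$ for $t\ge 0$, so $R\subseteq\Fix(f)$ and $\Fix(f)$ is unbounded (in particular, empty or unbounded, as claimed). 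Alternatively, since $\rho(a)-\phi(a)\ge 0$ for $a\in A$, each term $\rho(a)+\|x-a\|$ is at least $\rho(a)-\phi(a)+\phi(x)\ge\phi(x)$, so $\phi(f(x))=\tilde\rho(x)\ge\phi(x)$ for all $x$; thus $w-f(w)$ never equals $\lambda z$ with $\lambda>0$ and condition (3) of Theorem \ref{thm:main} fails.

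The hypothesis $m<n+1$ is used only in the first step, to guarantee a functional $\phi$ with $\phi(v_i)\ge 0$ for all $i$: when $m\ge n+1$ one may have $0\in\inter\conv\{v_1,\dots,v_m\}$, no such $\phi$ exists, and in that regime $\{v_i\}$ can in fact illuminate $B_1$ (see Lemma \ref{lem:smoothIllum}), forcing $\Fix(f)$ nonempty and bounded for any $f$ realizing the $v_i$. There is no real obstacle in the argument above; the only thing to be slightly careful about is the well-definedness and the $1$-Lipschitz estimate for $\rho$ on the union $A$, which is exactly the place where $\phi(v_i)\ge 0$ enters.
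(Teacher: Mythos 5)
Your proof is correct, and it takes a genuinely different route from the paper's. The paper produces an explicit rank-one affine map: after a case split on whether $v_1,\dots,v_m$ span $V$, it picks a functional $\phi$ constant on the $w_i$ (equal to $c>0$ in the spanning case, to $0$ otherwise), a unit vector $z$ with $\phi(z)=\|\phi\|_*\|z\|=1$, and sets $f(x)=\phi(x)z-cz$; then $f$ is nonexpansive because $\|f(x)-f(y)\|=|\phi(x-y)|\,\|z\|\le\|x-y\|$, each $f(w_i)=0$, and either $f^k(0)=-kcz$ escapes to infinity (so $\Fix(f)=\emptyset$) or the ray $\R z$ is fixed (so $\Fix(f)$ is unbounded). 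You instead construct $f$ in a single unified step by prescribing $f=0$ on $\{w_i\}$ and $f=\mathrm{id}$ on the ray $\{tz:t\ge 0\}$, verifying the partial data is $1$-Lipschitz with range in the one-dimensional subspace $\R z$, and invoking McShane's extension theorem for real-valued Lipschitz functions. The point you correctly identify — that one cannot extend a $V$-valued nonexpansive map from a subset of a general normed space, but a scalar $1$-Lipschitz function always extends — is exactly the flexibility that lets you avoid the paper's case split and instead directly enforce an unbounded fixed ray. Your verification of the $1$-Lipschitz estimate (the only nontrivial pair being $tz$ against $w_i$, where $\|tz-w_i\|\ge\phi(tz+v_i)=t+\phi(v_i)\ge t$) and of well-definedness on the overlap is right, and the two closing observations (unbounded fixed ray; alternatively $\phi\circ f\ge\phi$ so condition (3) of Theorem \ref{thm:main} fails via the direction $z$) are both valid. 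The trade-off is that the paper's $f$ is a single explicit affine formula, whereas yours is defined through an infimal convolution, but yours arguably reads more uniformly and makes the role of the hypothesis $m<n+1$ (existence of $\phi$ with $\phi(v_i)\ge 0$) more transparent.
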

\begin{proof}
First suppose that $v_1,\ldots,v_m$ span $V$. Fix $c>0$ and $w_i=-v_i$ for all $i$. Then there exists $\phi\in V^*$ such that $\phi(w_i) =c$ for all $i$.
Now let $z_0$ with $\|z_0\|=1$ be such that $\phi(z_0) =\|\phi\|_*$. So, if we let $z=z_0/\|\phi\|_*$, then 
$\phi(z)= \|\phi\|_*\|z\|=1$. 

Define $f\colon V\to V$ by $f(x)=\phi(x)z-cz$ for $x\in V$. Then 
\[
\|f(x)-f(y)\|  = |\phi(x-y)|\|z\| \leq  \|x-y\|\|\phi\|_*\|z\|=  \|x-y\|
\]
for all $x,y\in V$. Note that $f(w_i)=0$ for all $i$, so that $v_i=f(w_i) -w_i$. Also $f(\mu z)=\mu z-cz$ for all $\mu \in \R$, and hence  
$f^k(0) = -kcz$. Since $\{f^k(0)\colon k \in \N \}$ is unbounded, it follows that $\Fix(f)$ is empty. 

If $v_1,\ldots,v_m$ do not span $V$, then there exists $\psi\in V^*$ such that $\psi(v_i) =0$ for all $i$ and $\psi\neq 0$. Now let $f$ be defined as before with $\phi$ replaced by $\psi$ and $c=0$. Then $f$ is nonexpansive and $f(\mu z)=\mu z$ for all $\mu\in\mathbb{R}$. 
\end{proof}

\section{Applications to nonlinear eigenvalue problems} \label{sec:PF}
In this section we discuss applications to certain nonlinear eigenvalue problems on cones. In particular, we will consider maps $f\colon \R^n_{>0}\to  \R^n_{>0}$, where $ \R^n_{>0}$ is the interior of the standard positive cone $ \R^n_{\geq 0}:=\{x\in\R^n\colon x_i\geq 0\mbox{ for all }i\}$, that are order-preserving and homogeneous (of degree 1). Recall that $f\colon \R^n_{>0}\to  \R^n_{>0}$ is {\em order-preserving} if $f(x)\leq f(y)$ whenever $x\leq y$. Here $\leq $ is the partial ordering induced by $\R^n_{\geq 0}$, so $x\leq y$ if $y-x\in \R^n_{\geq 0}$. The map $f$ is said to be {\em homogeneous} if $f(\alpha x)=\alpha f(x)$ for all $\alpha>0$ and $x\in \R^n_{> 0}$. 

Particular motivation for studying these maps comes from game theory \cite{AkianGaubertHochart,BewleyKohlberg,RosenbergSorin} and  mathematical biology \cite{Nussbaum89,Schoen86}. In these applications it is often important to know if $f$ has an eigenvector $x\in \R^n_{> 0}$, so $f(x) =\lambda x$ for some $\lambda>0$. This is equivalent to asking whether the {\em normalized map} $g_f\colon\Sigma_0\to\Sigma_0$ given by, 
\begin{equation}\label{g_f}
g_f(x):=\frac{f(x)}{f(x)_n}\mbox{\quad for all }x\in \Sigma_0:=\{x\in \R^n_{> 0}\colon x_n=1\},
\end{equation}
has a fixed point in $\Sigma_0$. 

It is well known \cite[Lemma 2.1.6]{LNBook} that $f$ is nonexpansive under {\em Hilbert's metric}, which is given by 
\[
d_H(x,y) :=\log \left(\max_i\frac{x_i}{y_i}\right)-\log \left(\min_j\frac{x_j}{y_j}\right)\mbox{\quad for }x,y\in \R^n_{> 0}. 
\]
In fact, Hilbert's metric defines a metric between pairs of rays in $\R^n_{> 0}$, as $d_H(\alpha x,\beta y)=d_H(x,y)$ for all $\alpha,\beta>0$ and $x,y\in \R^n_{> 0}$, and $d_H(x,y) =0 $ if and only if $x=\alpha y$ for some $\alpha>0$, see \cite[Proposition 2.1.1]{LNBook}.  
Thus, $d_H$ is a metric on $\Sigma_0$ and $g_f$ is nonexpansive on $\Sigma_0$. 

If $x,y\in\R^n_{>0}$ are eigenvectors with eigenvalues say $\rho$ and $\mu$, then $\rho=\mu$, see \cite[Corollary 5.2.2]{LNBook}. It turns out that our  results can be used to analyze the {\em eigenspace},
\[\mathrm{E}(f):=\{x\in\R^n_{>0}\colon \mbox{$x$ is an eigenvector of $f$}\}.
\] 
Indeed, we have that $\mathrm{E}(f)$ is nonempty and bounded in $(\R^n_{>0},d_H)$ if and only if $\Fix(g_f)$ is nonempty and bounded in $(\Sigma_0,d_H)$. 
The reader can verify that the coordinate-wise log function is an isometry from $(\Sigma_0,d_H)$ onto the $(n-1)$-dimensional normed space $(V_0,\var{\cdot})$, where $V_0:=\{x\in\R^n\colon x_n=0\}$ and $\var{x}:=\max_i x_i-\min_j x_j$ is the {\em variation norm} on $V_0$, see \cite[\S 2.2]{LNBook}. It follows that the map $h\colon V_0\to V_0$ given by, 
\[
h(x) =(\Log\circ g_f\circ \Exp)(x),
\] 
is a nonexpansive on $(V_0,\var{\cdot})$, and hence we can apply our results to $h$. Note that the unit ball of $(V_0,\var{\cdot})$ has $2^n-2$ extreme points, which are given by, 
\begin{equation} \label{eq:varExtr}
\{v^I_+\colon \emptyset \neq I\subseteq \{1,\ldots,n-1\}\}\cup 
\{v^I_-\colon \emptyset \neq I\subseteq \{1,\ldots,n-1\}\}, 
\end{equation}
where $(v^I_+)_i =1$ if $i\in I$ and $0$ otherwise, and $(v^I_-)_i =-1$ if $i\in I$ and $0$ otherwise. See \cite[\S 2]{Nussbaum94} or \cite[Proposition 3.2]{KarlssonMetzNoskov} for details.

We begin by using our results to prove a nonlinear Perron-Frobenius type theorem. 
Recall that the classical Perron-Frobenius theorem says that if $A$ is a nonnegative $n$-by-$n$ matrix and $A$ is irreducible, then $A$ has a unique normalized eigenvector $v\in \R^n_{> 0}$ with eigenvalue the spectral radius $r(A)$ of $A$. The following result can be seen as a nonlinear Perron-Frobenius type theorem and should be compared to \cite[Theorem 6]{AkianGaubertHochart}, \cite{Cavazos},  \cite[Theorem 2]{GaubertGunawardena04}, and \cite[Theorem 6.2.3]{LNBook}. 
\begin{theorem}\label{thm:npfthm} If $f\colon \R^n_{> 0}\to \R^n_{> 0}$ is an order-preserving homogeneous map, then $\mathrm{E}(f)$ is nonempty and bounded in $(\R^n_{>0},d_H)$ if and only if for each nonempty proper subset $J$ of $\{1,\ldots,n\}$ there exists $x^J\in \R^n_{> 0}$ such that 
\begin{equation}\label{eq:5.1}
\max_{j\in J}\,\frac{f(x^J)_j}{x^J_j}< \min_{j\in J^c}\,\frac{f(x^J)_j}{x^J_j}.
\end{equation}
\end{theorem}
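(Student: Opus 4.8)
The plan is to transport the problem to the $(n-1)$-dimensional normed space $(V_0,\var{\cdot})$ and apply Theorem~\ref{thm:main}, exactly as Proposition~\ref{prop:supNorm} does for the supremum norm. Recall from the discussion above that the coordinate-wise logarithm is an isometry of $(\Sigma_0,d_H)$ onto $(V_0,\var{\cdot})$, that $\mathrm{E}(f)$ is nonempty and bounded in $(\R^n_{>0},d_H)$ if and only if $\Fix(g_f)$ is nonempty and bounded in $(\Sigma_0,d_H)$, and that $h:=\Log\circ g_f\circ\Exp$ is a nonexpansive self-map of $(V_0,\var{\cdot})$ with $\Fix(h)=\Log(\Fix(g_f))$. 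Since $\Log$ is an isometry, the statement to be proved is equivalent to: $\Fix(h)$ is nonempty and bounded. By the equivalence of $(1)$ and $(4)$ in Theorem~\ref{thm:main}, this holds if and only if there are $w_1,\dots,w_m\in V_0$ such that $\{h(w_i)-w_i\colon i=1,\dots,m\}$ illuminates the closed unit ball $B_1(0)$ of $(V_0,\var{\cdot})$; and since this ball is a polytope whose extreme points are the vectors listed in \eqref{eq:varExtr}, Lemma~\ref{lem:4.1} reduces this to illuminating each of those extreme points.

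The next step is to compute, for a fixed $v\in V_0$, which extreme points $v^I_+$ and $v^I_-$ it illuminates. Each such vector has variation norm $1$, and (writing $I^c:=\{1,\dots,n\}\setminus I$) for all sufficiently small $\lambda>0$ the maximal coordinate of $v^I_++\lambda v$ is attained on $I$ and the minimal one on $I^c$, so that $\var{v^I_++\lambda v}=1+\lambda\bigl(\max_{i\in I}v_i-\min_{i\in I^c}v_i\bigr)$; hence $v$ illuminates $v^I_+$ if and only if $\max_{i\in I}v_i<\min_{i\in I^c}v_i$, and similarly $v$ illuminates $v^I_-$ if and only if $\max_{i\in I^c}v_i<\min_{i\in I}v_i$. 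As $I$ ranges over the nonempty subsets of $\{1,\dots,n-1\}$, the sets $I$ (arising from the $v^I_+$) exhaust all nonempty $J\subseteq\{1,\dots,n\}$ with $n\notin J$, and the sets $I^c$ (arising from the $v^I_-$) exhaust all $J\subseteq\{1,\dots,n\}$ with $n\in J$ and $J\ne\{1,\dots,n\}$; together these are precisely the nonempty proper subsets $J$ of $\{1,\dots,n\}$. Thus $\{h(w_i)-w_i\}$ illuminates $B_1(0)$ if and only if for every nonempty proper $J\subseteq\{1,\dots,n\}$ there is an index $i$ with $\max_{j\in J}(h(w_i)-w_i)_j<\min_{j\in J^c}(h(w_i)-w_i)_j$.

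It remains to rewrite the vectors $h(w)-w$ in terms of $f$. If $w=\Log(y)$ with $y\in\Sigma_0$, then $h(w)=\Log\bigl(f(y)/f(y)_n\bigr)$, so $(h(w)-w)_i=\log\bigl(f(y)_i/y_i\bigr)-\log f(y)_n$; the additive constant $-\log f(y)_n$ is immaterial for an inequality of the form $\max_{j\in J}(\cdot)_j<\min_{j\in J^c}(\cdot)_j$, and since $t\mapsto e^t$ is increasing, such an inequality holds if and only if $\max_{j\in J}f(y)_j/y_j<\min_{j\in J^c}f(y)_j/y_j$. Taking $x^J:=y$ shows that illumination of all extreme points forces \eqref{eq:5.1} for every nonempty proper $J$; conversely, given any $x^J\in\R^n_{>0}$ satisfying \eqref{eq:5.1}, homogeneity of $f$ makes the ratios $f(x^J)_j/x^J_j$ invariant under positive scaling, so we may rescale $x^J$ into $\Sigma_0$ and set $w_i:=\Log(x^J)$. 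Chaining the three equivalences yields the theorem.

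The only real difficulty is the combinatorial bookkeeping in the middle step: pinning down which $v\in V_0$ illuminate each extreme point of the variation-norm ball, and checking that the families $\{v^I_+\}$ and $\{v^I_-\}$ correspond, under $I\leftrightarrow J$, to exactly the collection of all nonempty proper subsets of $\{1,\dots,n\}$ occurring in \eqref{eq:5.1}. One should also keep in mind that passing between $\Sigma_0$ and $V_0$ (with the normalization $y_n=1$) only shifts $h(w)-w$ by a constant vector, which is harmless for illuminating the extreme points of this particular polytope.
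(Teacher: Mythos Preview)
Your proof is correct and follows essentially the same route as the paper: transport the problem to $(V_0,\var{\cdot})$ via $h=\Log\circ g_f\circ\Exp$, invoke the equivalence $(1)\Leftrightarrow(4)$ of Theorem~\ref{thm:main}, reduce via Lemma~\ref{lem:4.1} to illumination of the extreme points \eqref{eq:varExtr}, and translate the resulting coordinate inequalities back through $\Log$ to obtain \eqref{eq:5.1}. Your organization is slightly cleaner than the paper's --- you compute the illumination criterion for a general $v\in V_0$ once and then use it in both directions, whereas the paper handles the forward implication tersely and only spells out the extreme-point computation for the converse --- but the mathematical content is identical.
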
 
\begin{proof}
Note that $\mathrm{E}(f)$ is nonempty and bounded in $(\R^n_{>0},d_H)$ if and only if $\Fix(g_f)$ is nonempty and bounded, which is equivalent to saying that the nonexpansive map $h =\Log\circ g_f\circ \Exp$ on $(V_0,\var{\cdot})$ has a nonempty and bounded fixed point set. So, if $\mathrm{E}(f)$ is nonempty and bounded, then  it follows from Theorem \ref{thm:main} that for each proper nonempty subset $J$ of $\{1,\ldots,n\}$ there exists $y^J\in V_0$ such that 
\begin{equation}\label{1}
\max_{j\in J}\, h(y^J)_j -y^J_j<\min_{j\in J^c}\, h(y^J)_j-y^J_j.
\end{equation}
Now let $x^J:=\Exp(y^J)\in \Sigma_0$. Then (\ref{1}) is equivalent to 
\[
\max_{j\in J}\,(h(\Log(x^J))_j -(\Log(x^J))_j<\min_{j\in J^c}\, (h(\Log(x^J))_j-(\Log(x^J))_j,
\]
which holds if and only if 
\begin{equation}\label{2}
\max_{j\in J}\,\log g_f(x^J)_j -\log x^J_j<\min_{j\in J^c}\, \log g_f(x^J)_j-\log x^J_j,
\end{equation}
where $g_f$ is given in (\ref{g_f}). Now note that (\ref{2}) holds if and only if 
\[
\max_{j\in J}\,\frac{g_f(x^J)_j }{x^J_j}<\min_{j\in J^c}\, \frac{g_f(x^J)_j}{x^J_j}.
\]
which is equivalent to (\ref{eq:5.1}).


Now suppose that (\ref{eq:5.1}) holds. For each nonempty proper subset $J$ of $\{1,\ldots,n\}$ let $y^J\in V_0$ be given by $y^J:=\Log(x^J/x^J_n)$. So, the inequality (\ref{1}) holds for each $y^J$. 

Note that if $v^I_+$ is an extreme point of $B_1$ given by \eqref{eq:varExtr}, then  for each $\epsilon>0$ sufficiently small we have that 
\[
\var{v^I_+ +\epsilon(h(y^I) -y^I)} = 1+\epsilon\max_{j\in I}\,( h(y^J)_j-y^J_j) -\epsilon\min_{j\in I^c}\, (h(y^J)_j-y^J_j) <1, 
\]
and hence $v^I_+$ is illuminated by $h(y^I) -y^I$. Likewise, for $v^I_-$ we can take $J:= \{1,\ldots,n\}\setminus I$, so that for all $\epsilon>0$ sufficiently small, 
\[
\var{v^I_- +\epsilon(h(y^J) -y^J)} = \epsilon\max_{j\in J}\,( h(y^J)_j-y^J_j) -(-1+\epsilon\min_{j\in J^c}\, (h(y^J)_j-y^J_j))<1,
\]
which shows that $v^I_-$ is illuminated by $h(y^J) -y^J$. It now follows from Lemma \ref{lem:4.1} and Theorem \ref{thm:main} that $\Fix(h)$ is nonempty and bounded, which implies that $\Fix(g_f)$ is nonempty and bounded.
\end{proof}
\begin{remark}
Using a case by case analysis it is not hard to show that the illumination number of the   unit ball $B_1$ in the  $(n-1)$-dimensional normed space $(V_0,\var{\cdot})$ is 3 for $n=3$, and $6$ for  $n=4$. For general $n$ the situation is not so clear, at least to the authors. The reader can, however, verify that if $S=\{v_1,\ldots,v_{2^{n-1}}\}\subseteq V_0$  is such that for each vector $s\in\{-1,1\}^{n-1}$ there exists $v\in S$ with $\sign v_i= s_i$ for all $i=1,\ldots,n-1$, where $\sign 0 =0$, then $S$ illuminates  $B_1$. So, for general $n$  the illumination number of the unit ball $B_1$ in the  normed space $(V_0,\var{\cdot})$ is at most $2^{n-1}$.  
\end{remark}

\begin{remark}
If an order-preserving homogeneous map on $\mathbb{R}^n_{>0}$ is a linear map associated to a nonnegative matrix $A$, then the eigenspace $\mathrm{E}(A)$ is nonempty and bounded in Hilbert's metric if and only if $A$ has a unique  (up to scaling) eigenvector in $\mathbb{R}^n_{>0}$. 

For a nonnegative $n$-by-$n$ matrix $A = (a_{ij})$, let $G(A)$ denote the adjacency digraph of $A$, that is, the graph on vertices $\{1,\ldots,n\}$ with an edge from $i$ to $j$ if and only if $a_{ij} > 0$.  For $i, j \in \{1,\ldots,n\}$, we say that $i$ has \textit{access} to $j$ if there is a path from $i$ to $j$ in $G(A)$. We say that $i$ and $j$ \textit{communicate} if they both have access to each other.  Communication is an equivalence relation, and the equivalence classes of $\{1,\ldots,n\}$ under communication are called the \textit{classes} of $A$.  A class $\alpha$ is \textit{final} if no vertex $i \in \alpha$ has access to any vertex outside $\alpha$. It is \textit{basic} if the square submatrix of $A$ corresponding to $\alpha$ has spectral radius equal to the spectral radius of $A$. A nonnegative matrix $A$ has a positive eigenvector if and only if the final classes of $A$ are exactly its basic classes. Furthermore, the positive eigenvector is unique (up to scaling) if and only if $A$ has only one basic, final class. See \cite[Ch. 2, Theorem 3.10 and its proof]{BermanPlemmons} for details. Note that the classes and their corresponding spectral radii can be determined with prescribed accuracy in polynomial time as the dimension $n$ grows.  This is much faster than verifying the conditions of Theorem \ref{thm:npfthm} for linear maps with large $n$.  
\end{remark}


We should note that a nonlinear order-preserving homogeneous map $f\colon \mathbb{R}^n_{>0}\to\mathbb{R}^n_{>0}$ can have an eigenspace space $E(f)$ which is bounded in Hilbert's metric and consists of more than a single ray.  Simple examples of such maps can be constructed as follows. 
\begin{example} \label{ex:tri}
For $0 \leq c \leq \tfrac{1}{3}$, let 
\begin{align*}
\mu_{1,c}(x) := \max\{x_2,x_3,c(x_1+x_2+x_3)\}, \\
\mu_{2,c}(x) := \max\{x_1,x_3,c(x_1+x_2+x_3)\}, \\
\mu_{3,c}(x) := \max\{x_1,x_2,c(x_1+x_2+x_3)\}.
\end{align*}
Let $f_c\colon \R^3_{>0} \to \R^3_{>0}$ be defined
$$
f_c(x) := \begin{cases}
(x_1,\mu_{1,c}(x),\mu_{1,c}(x)) & \text{if }x_1 = \max\{x_1,x_2,x_3\}, \\
(\mu_{2,c}(x),x_2,\mu_{2,c}(x)) & \text{if }x_2 = \max\{x_1,x_2,x_3\}, \\
(\mu_{3,c}(x),\mu_{3,c}(x),x_3) & \text{if }x_3 = \max\{x_1,x_2,x_3\}. \\
\end{cases} 
$$
It is not hard to verify that $f_c$ is well-defined, order-preserving, and homogeneous. In the case $c=0$, $f_c$ was described in \cite[p.\,131]{Nussbaum89}. Let $\Sigma := \{ x \in\R^3_{>0}\colon  x_1+x_2+x_3 = 1 \}$ and let $g_c(x) := f_c(x)/(x_1+x_2+x_3)$ for all $x \in \Sigma$. For convenience, let $e_1, e_2, e_3$ denote the elementary basis vectors in $\R^n$, so that $\Sigma$ is the relative interior of $\conv \{e_1, e_2, e_3 \}$.  For each $c$, $\Fix(g_c)$ is the union of three line segments, $[\tfrac{1}{3}(e_1+e_2+e_3), (1-3c)e_i + c (e_1+e_2+e_3)] \cap \Sigma$, $i \in \{1,2,3\}$ as shown in Figure \ref{fig:fluxCap}.  If $c= \frac{1}{3}$, then $\Fix(g_c)$ is a single point and therefore $f_c$ has a unique eigenvector in $\R^3_{>0}$ (up to scaling). When $c < \tfrac{1}{3}$, however, the fixed points of $g_c$ are not unique, nor are the fixed point sets convex. If  $c > 0$, $\Fix(g_c)$ is nonempty and bounded in Hilbert's metric on $\Sigma$. If $c = 0$, the fixed point set is unbounded and Theorem \ref{thm:npfthm} does not apply.  
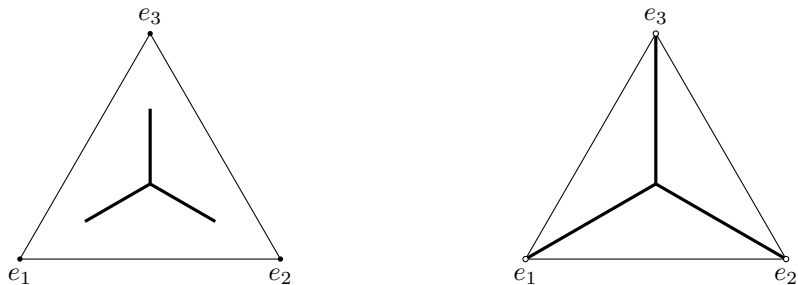
\begin{figure}[h]
\[
\begin{tabular}{ccc}
\begin{tikzpicture}
\draw (90:2) -- (210:2) -- (330:2) -- cycle;
\draw[very thick] (0,0) -- (90:1);
\draw[very thick] (0,0) -- (210:1);
\draw[very thick] (0,0) -- (330:1);
\fill (90:2) node[above] {$e_3$} circle (1pt); 
\fill (210:2) node[below] {$e_1$} circle (1pt);
\fill (330:2) node[below] {$e_2$} circle (1pt);
\end{tikzpicture} 
& \hspace*{2cm} &
\begin{tikzpicture}
\draw (90:2) -- (210:2) -- (330:2) -- cycle;
\draw[very thick] (0,0) -- (90:2);
\draw[very thick] (0,0) -- (210:2);
\draw[very thick] (0,0) -- (330:2);
\filldraw[fill=white] (90:2) node[above] {$e_3$} circle (1pt); 
\filldraw[fill=white] (210:2) node[below] {$e_1$} circle (1pt);
\filldraw[fill=white] (330:2) node[below] {$e_2$} circle (1pt);
\end{tikzpicture}
\end{tabular}\]
\caption[]{$\Fix(g_c)$ from Example \ref{ex:tri} when $c = \tfrac{1}{6}$ (left) and $c = 0$ (right).} \label{fig:fluxCap}
\end{figure}
\end{example}


Theorem \ref{thm:npfthm} suggests the following algorithm for detecting the presence of eigenvectors in $\mathbb{R}^n_{>0}$.  Let $f\colon \mathbb{R}^n_{>0} \to \mathbb{R}^n_{>0}$ be an order-preserving homogeneous  map .  
\begin{description}
\item[Step 1] Randomly select $x \in \Sigma_0$ and compute $f(x)_j/x_j$ for all $j \in \{1,\ldots, n\}$.  
\item[Step 2] Record all nonempty proper subsets $J \subset \{1,\ldots,n\}$ such that inequality  (\ref{eq:5.1}) holds.  
\item[Step 3] Repeat steps 1 and 2 until every nonempty proper subset $J$ has been recorded.  
\end{description}

If the algorithm above halts, then $f$ has an eigenvector in $\mathbb{R}^n_{>0}$. Of course, if the eigenspace of $f$ is  empty or unbounded under Hilbert's metric, then the algorithm will never halt. Also note that as each $x\in \mathbb{R}^n_{>0}$ can satisfy inequality (\ref{eq:5.1}) for at most $n-1$ different proper non-empty subsets $J$ of $\{1,\ldots,n\}$,  the algorithm will need to try at least $\frac{2^n-2}{n-1}$ different elements of $\Sigma_0$, which makes it impractical when $n$ is large. However, in low dimensional spaces the algorithm could be useful.

In the remainder of this section, we describe a class of order-preserving homogeneous maps $f\colon \mathbb{R}^n_{>0}\to\mathbb{R}^n_{>0}$ for which no general methods exist to determine the existence of an eigenvector in $\mathbb{R}^n_{>0}$. For these maps, the algorithm above may be particularly useful.  For $r\in\mathbb{R}$ with $r\neq 0$, and $\sigma\in \mathbb{R}^n_{\geq 0}$ with $\sum_i\sigma_i=1$ define the {\em $(r,\sigma)$-mean} of $x\in\mathbb{R}^n_{>0}$ by 
\[
M_{r\sigma}(x):=\left(\sum_{i=1}^n \sigma_ix_i^{1/r}\right)^{1/r},
\]
and $M_{0\sigma}(x):= \prod_{i\in\mathrm{supp}(\sigma)} x_i$, where $\mathrm{supp}(\sigma)=\{i\colon \sigma_i>0\}$. Furthermore define 
$M_{\infty,\sigma}(x) :=\max\{x_i\colon i\in\mathrm{supp}(\sigma)\}$ and $M_{-\infty,\sigma}(x) :=\min\{x_i\colon i\in\mathrm{supp}(\sigma)\}$. We say that an order-preserving homogeneous map $f\colon \mathbb{R}^n_{>0}\to\mathbb{R}^n_{>0}$ belongs to $M$ if each coordinate function is of the form 
\[
f_i(x) =\sum_{(r,\sigma)\in\Gamma_i} c_{ir\sigma}M_{r\sigma}(x),
\]
where $\Gamma_i$ is a nonempty set of pairs $(r,\sigma)$, with $r\in [-\infty,\infty]$ and $\sigma\in \mathbb{R}^n_{\geq 0}$ such that $\sum_i\sigma_i=1$, and each $c_{ir\sigma}>0$. A map $f\in M$ is said to belong to $M_+$ if each $r\in [0,\infty)$, and it belongs to $M_-$ if each $r\in (-\infty,0)$. By closing the sets $M$, $M_+$ and $M_-$ under multiplication with positive scalars, addition, and composition, we obtain classes of maps $\mathcal{M}$, $\mathcal{M}_+$, and $\mathcal{M}_-$, respectively. For maps in  $\mathcal{M}_+$ there exist a variety of general results to determine the existence of an eigenvector in $\mathbb{R}^n_{>0}$. However, no general methods for detecting eigenvectors in $\mathbb{R}^n_{>0}$ are known for maps in $\mathcal{M}_-$. An extensive discussion of this problem can be found in \cite[Section 6.6]{LNBook} and \cite{Nussbaum89}. 


In \cite{Schoen86}, the following map $f\colon \R^4_{>0} \to \R^4_{>0}$ was studied as part of a population biology model,  
\begin{equation} \label{eq:schoen}
f(x) := \left( \begin{array}{l} 
a_1 x_1 + b_1 \theta(x_1,x_2) + c_1 \theta(x_1,x_4) + d_1 \theta(x_2,x_3) \\
a_2 x_2 + b_2 \theta(x_1,x_2) + c_2 \theta(x_1,x_4) + d_2 \theta(x_2,x_3) \\
a_3 x_3 + b_3 \theta(x_3,x_4) + c_3 \theta(x_1,x_4) + d_3 \theta(x_2,x_3) \\
a_4 x_4 + b_4 \theta(x_3,x_4) + c_4 \theta(x_1,x_4) + d_4 \theta(x_2,x_3) \end{array}
\right)
\end{equation}
where the coefficients $b_i, c_i, d_i \ge 0$ for $ 1 \le i \le 4$, with at least one positive for each $i$, and $\theta(s,t) := (s^{-1}+t^{-1})^{-1}$. In the original model, the coefficients $a_i$ were negative as they represent a `force of mortality'. By adding a multiple of the identity to $f$, we may assume that each $a_i$, $1 \le i \le 4$, is positive without changing the eigenvectors of $f$.  With this additional assumption, $f \in M_-$.  In \cite[\S 3]{Nussbaum89}, detailed conditions on the coefficients of $f$ are given that determine whether or not $f$ has an eigenvector with positive entries.  

Unlike the conditions in \cite{Nussbaum89}, the algorithm discussed in this section does not classify all coefficients for which the map above has a entry-wise positive eigenvector.  For any particular choice of coefficients, however, the algorithm gives an elementary method for determining whether the map with those coefficients has a positive eigenvector. The advantage of the new algorithm lies in its ability to work with any order-preserving homogeneous map $f\colon \R^n_{>0} \to \R^n_{>0}$, even ones for which other techniques fail. 

\begin{example}
Let $f$ and $g$ be defined by \eqref{eq:schoen}, where the coefficients of $f$ and $g$ are given by  
$$\begin{pmatrix}
a_1 & b_1 & c_1 & d_1  \\
a_2 & b_2 & c_2 & d_2 \\
a_3 & b_3 & c_3 & d_3 \\
a_4 & b_4 & c_4 & d_4 
\end{pmatrix} := \begin{pmatrix}
1 & 2 & 3 & 4  \\
2 & 1 & 1 & 1 \\
3 & 1 & 3 & 5 \\
4 & 3 & 1 & 2 
\end{pmatrix}, \text{ and } \begin{pmatrix}
2 & 5 & 7 & 2  \\
3 & 3 & 1 & 1 \\
4 & 4 & 13 & 1 \\
1 & 2 & 7 & 8
\end{pmatrix},$$
respectively.  The composition $f \circ g \in \mathcal{M_-}$.  Known results cannot confirm whether or not $f \circ g$ has an eigenvector in $\R^4_{>0}$.  However, our algorithm quickly verifies that $f \circ g$ has eigenvectors in $\R^4_{>0}$.  Note that $f \circ g$ is Fr\'echet differentiable and the derivative $D(f\circ g)(x)$ is a matrix with all positive entries for every $x \in \R^4_{>0}$.  This implies that if $f \circ g$ has an eigenvector in $\R^4_{>0}$, then it must be unique up to scaling.  See, for example, \cite[Corollary 6.4.8]{LNBook}. It also follows from \cite[Theorem 3.7]{Nussbaum89} that for any strictly positive vector $x$ in $\R^4$, the normalized iterates of $f\circ g$ applied to $x$ converge to the unique normalized eigenvector of $f \circ g$ once one knows that $f \circ g$  has a strictly positive eigenvector. This  observation can be exploited to make our algorithm somewhat more efficient for many functions in $\mathcal{M}$, though we have not attempted to do so here. 

To generate test vectors for the algorithm, we randomly selected vectors $w = \operatorname{Exp}(y)$, where $y \in V_0:=\{x\in\mathbb{R}^n\colon x_n=0\}$, are vectors that are uniformly distributed in the set $\{y \in V_0 \colon -R \leq y_j \leq R \mbox{ for all } 1 \leq j \leq n-1 \}$.  For this example, the value $R= 100$ worked well, but of course, $R$ must be chosen large enough to accommodate the set of eigenvectors of the map.  We ran 500 independent trials and recorded the number of test vectors $w$ needed to confirm the existence of a bounded set of eigenvectors. The largest number of test vectors needed was 303, and the smallest was 10.  The average was 54.4, and the median was 39. By iterating $f \circ g$ on the vector $(1,1,1,1) \in \R^4$, we find that the unique eigenvector is approximately $(0.24138896, 0.10237913, 0.56235034, 1)$ when normalized so that the last entry is 1.  

\end{example}


\section{Localizing the fixed point set}

Once the presence of a nonempty and bounded set of fixed points has been confirmed, a natural follow up problem is to give bounds on the location of the fixed points. Here we show how this can be accomplished.
 
If $C$ is a bounded set in a finite dimensional normed linear space $V$, define $R_0 \ge 0$ by 
$$R_0 := \inf \{R>0 \colon \bigcap_{x \in C} B_R(x) \ne \varnothing \}$$
and call $R_0$ the \emph{circumradius of} $C$.  Note that as for each $R>R_0$, $\bigcap_{x\in C} B_R(x)$ is compact, convex and nonempty, therefore $\bigcap_{x \in C} B_{R_0}(x)$ is compact, convex and nonempty. A point $p \in \bigcap_{x \in C} B_{R_0}(x)$ is called a \emph{circumcenter} of $C$.  

Suppose that $(V,\|\cdot\|)$ is a finite dimensional normed space with unit ball $B_1$ and suppose that $\{v_1,\ldots,v_m \}$ illuminate $\partial B_1$. 
For each $j =1,\ldots,m$, the set $U_j:=\{x\in\partial B_1\colon v_j \text{  illuminates }x \}$ is relatively open in $\partial B_1$. Let $U_j^c := \partial B_1 \backslash U_j$ and note that $\max_j d(z,U^c_j)>0$ for all $z\in\partial B_1$. As $\partial B_1$ is compact, we find that 
\begin{equation} \label{eq:delta}
\delta:= \min_{z\in\partial B_1}\max_{j=1,\ldots,m} d(z,U^c_j)>0.
\end{equation}  
We also define the following two constants.  
\begin{equation} \label{eq:A}
\alpha := \sup_{z \in \partial B_1} d(z,\extr{B_1}).
\end{equation}
\begin{equation} \label{eq:B}
\beta := \inf \{ \|z-u\| \colon z \in \partial B_1, u \in \extr{B_1}  \text{ and } \|\tfrac{1}{2}z+\tfrac{1}{2}u\| < 1 \}.
\end{equation}
Here $\extr B_1$ denotes the extreme points of $B_1$.  
\begin{theorem} \label{thm:localize}
Suppose $f\colon V \rightarrow V$ is a nonexpansive map on a finite dimensional normed space $(V,\|\cdot\|)$ with unit ball $B_1$ and there exist $w_1, \ldots, w_m \in V$ such that $\{f(w_i) - w_i\colon i = 1, \ldots, m \}$ illuminates $B_1$.  If $R_0$ is the circumradius of $\{w_1, \ldots, w_m \}$, $p$ is a circumcenter and $\delta$ is defined as in \eqref{eq:delta}, then $\Fix(f) \subset B_R(p)$ where $R := (\tfrac{2+\delta}{\delta})R_0$.  Furthermore, if $\alpha,\beta$ are defined as in \eqref{eq:A} and \eqref{eq:B} above and $\beta > \alpha$, then $\delta \ge \beta-\alpha$ and $\Fix(f) \subset B_{R'}(p)$ where $R' := (\tfrac{2+\beta-\alpha}{\beta-\alpha})R_0$. 
\end{theorem}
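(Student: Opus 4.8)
The plan is to establish the first bound directly, then to derive the inequality $\delta\ge\beta-\alpha$ as a statement about the norm and any illuminating set, and finally to obtain the second bound for free since $t\mapsto(2+t)/t$ is decreasing on $(0,\infty)$.

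\emph{First bound.} Let $x^*\in\Fix(f)$ and put $\rho:=\|x^*-p\|$; we must show $\rho\le R$. We may assume $\rho>R_0$, since otherwise $\rho\le R_0\le R$. Then $x^*\ne p$ and $r_i:=\|x^*-w_i\|\ge\rho-R_0>0$ for every $i$, so $\eta_i:=(x^*-w_i)/r_i$ and $\eta_0:=(x^*-p)/\rho$ all lie in $\partial B_1$. Writing $v_i:=f(w_i)-w_i$, nonexpansiveness of $f$ and $f(x^*)=x^*$ give $\|\eta_i-v_i/r_i\|\le1$; expressing $\eta_i$ as the convex combination $\tfrac{s}{s+1}(\eta_i-v_i/r_i)+\tfrac{1}{s+1}(\eta_i+sv_i/r_i)$ and using convexity of the norm yields $\|\eta_i+sv_i/r_i\|\ge1$ for all $s>0$, i.e. $v_i$ does not illuminate $\eta_i$, so $\eta_i\in U_i^c$. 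A triangle-inequality estimate using $\|w_i-p\|\le R_0$ gives $\|\eta_i-\eta_0\|\le 2R_0/r_i\le 2R_0/(\rho-R_0)$. Choosing $j$ with $d(\eta_0,U_j^c)\ge\delta$ (possible by \eqref{eq:delta}) and using $\eta_j\in U_j^c$, we get $\delta\le d(\eta_0,U_j^c)\le\|\eta_0-\eta_j\|\le 2R_0/(\rho-R_0)$, which rearranges to $\rho\le\tfrac{2+\delta}{\delta}R_0=R$.

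\emph{The inequality $\delta\ge\beta-\alpha$ when $\beta>\alpha$.} Fix $z\in\partial B_1$ and $\epsilon>0$; choose an extreme point $u$ of $B_1$ with $\|z-u\|<\alpha+\epsilon$ and an index $k$ with $v_k$ illuminating $u$ (possible since $u\in\partial B_1$ and $\{v_i\}$ illuminates $B_1$). I claim $v_k$ illuminates every $y\in\partial B_1$ with $\|z-y\|<\beta-\alpha-\epsilon$ (the case $y=u$ being trivial). For such $y$ we have $\|u-y\|<\beta$, so the definition \eqref{eq:B} of $\beta$ forces $\|\tfrac12 u+\tfrac12 y\|=1$, hence $[u,y]\subseteq\partial B_1$; extend this to a maximal boundary segment $[u,y^*]\subseteq\partial B_1$. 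If $y$ lies strictly between $u$ and $y^*$, Lemma \ref{lem:4.1} shows $v_k$ illuminates $y$. The case $y=y^*$ cannot occur: since $[u,y^*]$ cannot be extended within $\partial B_1$, the point $u$ does not belong to the minimal face of $B_1$ containing $y^*$, so there are boundary points arbitrarily close to $y^*$ whose segment to $u$ meets $\inter B_1$; applying \eqref{eq:B} to such points and letting them tend to $y^*$ gives $\|u-y^*\|\ge\beta$, contradicting $\|u-y\|<\beta$. Thus $d(z,U_k^c)\ge\beta-\alpha-\epsilon$ for every $\epsilon>0$, so $\max_j d(z,U_j^c)\ge\beta-\alpha$, and taking the minimum over $z\in\partial B_1$ gives $\delta\ge\beta-\alpha$. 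Since $t\mapsto(2+t)/t$ is decreasing, $R\le R'$, whence $\Fix(f)\subset B_R(p)\subset B_{R'}(p)$.

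I expect the main obstacle to be the face-structure step that rules out $y=y^*$, namely that the far endpoint of a maximal boundary segment emanating from an extreme point lies at distance at least $\beta$ from that point. The natural route is to exhibit a supporting functional at $y^*$ that is not supporting at $u$ and use it to locate nearby boundary points whose segment to $u$ dips into $\inter B_1$; a little care with faces of a general convex body is needed, although $\beta>\alpha$ forces the unit ball to be polyhedral-like, where the argument is routine.
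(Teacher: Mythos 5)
Your first bound is essentially the paper's argument in contrapositive form: where you fix $x^*\in\Fix(f)$ with $\rho=\|x^*-p\|>R_0$ and show directly that the unit vectors $\eta_j:=(x^*-w_j)/\|x^*-w_j\|$ all lie in $U_j^c$ (via nonexpansiveness and convexity of the norm) while one of them is within $2R_0/(\rho-R_0)$ of $\eta_0:=(x^*-p)/\rho$, the paper instead supposes $\|z-p\|>R$ and derives $\|z-f(w_i)\|>\|z-w_i\|$, contradicting nonexpansiveness when $z$ is fixed. The triangle-inequality estimate and the convexity trick are the same in both; this part is fine.

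Your route to $\delta\ge\beta-\alpha$ genuinely differs from the paper's, and that difference is where the soft spot lies. The paper proves the contrapositive of your claim directly: if $v_i$ illuminates the extreme point $u$ and $y\in U_i^c$, it forms the boundary points $\frac{y+\epsilon v_i}{\|y+\epsilon v_i\|}$, shows their midpoint with $u$ lands in $\inter B_1$ (because $u+\epsilon' v_i\in\inter B_1$ and $\|y+\epsilon v_i\|\ge1$), reads off $\|\frac{y+\epsilon v_i}{\|y+\epsilon v_i\|}-u\|\ge\beta$ from \eqref{eq:B}, and lets $\epsilon\to0$. You instead pass through $[u,y]\subset\partial B_1$, extend to a maximal boundary segment $[u,y^*]$, invoke Lemma \ref{lem:4.1}, and must then rule out $y=y^*$ by showing $\|u-y^*\|\ge\beta$. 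Your justification for that last fact---``$u$ is not in the minimal face of $y^*$, so there are boundary points near $y^*$ whose chord to $u$ dips into $\inter B_1$''---is exactly the step you flag as the obstacle, and as stated it is not a theorem; the implication needs an argument. The clean way to close it, which avoids faces entirely, is to use maximality of $[u,y^*]$ directly: for $t>0$ the point $z_t:=y^*+t(y^*-u)$ lies outside $B_1$, so $s:=\|z_t\|>1$, and $y'_t:=z_t/s\in\partial B_1$ converges to $y^*$ as $t\to0$; moreover $\tfrac12(u+y'_t)=\tfrac{s-t}{2s}u+\tfrac{1+t}{2s}y^*$ has norm at most $\tfrac{s+1}{2s}<1$, so \eqref{eq:B} forces $\|y'_t-u\|\ge\beta$, and letting $t\to0$ gives $\|y^*-u\|\ge\beta$. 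With that substitution your proof is complete; it does the same work as the paper's perturbation but detours through Lemma \ref{lem:4.1} and segment maximality, which the paper's argument makes unnecessary.
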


\begin{proof}
Let $R_0$ be the circumradius of $\{w_i\colon i = 1, \ldots, m\}$ and let $p$ be a circumcenter for $\{w_i\colon i = 1, \ldots, m\}$, so $\|w_i - p \| \le R_0$ for $ 1 \le i \le m$.  Define $\hat{f}(x) := f(x+p) - p$ and $\hat{w}_i := w_i - p$ for $i = 1,\ldots,m$, so $\hat{f}$ is nonexpansive, $\hat{f}(\hat{w}_i) - \hat{w}_i = f(w_i)-w_i$ and $\|\hat{w}_i\| \le R_0$ for $1 \le i \le m$  and $\Fix(\hat{f}) = \{x-p \colon x \in \Fix(f)\}$.  Thus, by replacing $f$ with $\hat{f}$ and $w_i$ by $\hat{w}_i$, $1\le i \le m$, we may as well assume that $p=0$ and $\|w_i \| \le R_0$ for $1 \le i \le m$.  

Suppose now that $z \in V$ and $\|z \| > R:= (\frac{2+\delta}{\delta})R_0$. By \eqref{eq:delta}, there is an $i \in \{1,\ldots, m\}$ such that $d(z/\|z\|, U_i^c) \ge \delta$.  

We claim that, 
$$\left\| \frac{z-w_i}{\|z-w_i\|} - \frac{z}{\|z\|} \right\| < \delta.$$
To see this, observe that 
\begin{align*}
\left\| \frac{z-w_i}{\|z-w_i\|} - \frac{z}{\|z\|} \right\| &\le \frac{\|w_i\|}{\|z-w_i\|} + \|z\| \left| \frac{1}{\|z-w_i\|}- \frac{1}{\|z\|} \right| \\ 
&\le \frac{R_0}{\|z\|-R_0} + \frac{| \|z\| - \|z-w_i\| |}{\|z-w_i\|} \\
&\le \frac{R_0}{\|z\|-R_0} + \frac{R_0}{\|z\|-R_0}  \\
&< \frac{2R_0}{(\frac{2+\delta}{\delta})R_0-R_0} = \delta.
\end{align*}
It follows from \eqref{eq:delta} that $\frac{z-w_i}{\|z-w_i\|}$ is illuminated by $f(w_i)-w_i$. Therefore 
\[
\frac{z-w_i}{\|z-w_i\|} +\lambda (f(w_i)-w_i) \in\Int B_1
\]
for some small $\lambda > 0$. Note that the function $\mu \colon \R \to \R$ defined by
$$\mu\colon t \mapsto \left\| \frac{z-w_i}{\|z-w_i\|} +t(f(w_i)-w_i)\right\|$$
is convex. Since $\mu(0) = 1$ and $\mu(\lambda) < 1$, it follows that $\mu(t) > 1$ for all $t < 0$.  In particular, when $t = -1/\|z-w_i\|$ we see that 
\[
\left\| \frac{z-w_i}{\|z-w_i\|} -\frac{f(w_i) -w_i}{\|z-w_i\|}\right\| > 1 
\]
so that 
\[
\|z-f(w_i)\|>\|z-w_i\|.
\]
If $f(z) = z$, the inequality above contradicts the nonexpansiveness of $f$.  Thus $\Fix(f) \subset B_R(0)$. Note that the inclusion is strict as $B_R(0)$ contains $\{w_1,\ldots,w_m\}$, and none of the $w_i$ are fixed points of $f$. 

Now suppose that $\alpha, \beta$ are defined as in \eqref{eq:A} and \eqref{eq:B}, and $\beta > \alpha$.  Choose any $z \in \partial B_1$.  We will show that $\delta \ge \beta-\alpha$ by showing that
\begin{equation} \label{eq:maxDelta}
\max_{1 \le i \le m} d(z,U_i^c) \ge \beta - \alpha.
\end{equation}
First note that there exists an extreme point $u \in B_1$ such that $\|z-u\| \le \alpha'$ for any $\alpha'$ satisfying $\alpha < \alpha' < \beta$. There is also an $i \in \{1,\ldots,m\}$ such that $f(w_i)-w_i$ illuminates $u$. Now consider any $y \in U_i^c$.   Note that $v_i := f(w_i)-w_i$ does not illuminate $y$, therefore for all $\epsilon >0$, $\|y+\epsilon v_i \| \ge 1$.  Then for all sufficiently small $\epsilon >0$,
$$\frac{1}{2} \frac{y+\epsilon v_i}{\|y+\epsilon v_i\|} + \frac{1}{2}u = \frac{1}{2} \frac{y}{\|y+\epsilon v_i \|}+\frac{1}{2} \left(u+\frac{\epsilon v_i}{\|y+\epsilon v_i\|} \right) \in \inter B_1.$$
By \eqref{eq:B}, it follows that 
$$\left\| \frac{y+\epsilon v_i}{\|y+\epsilon v_i\|} - u \right\| \ge \beta.$$
By taking the limit as $\epsilon \rightarrow 0$, we see that $\|y-u\| \ge \beta$ as well.  By the triangle inequality,
$$\|z-y\| \ge \|y-u\| - \|z-u\| \ge \beta - \alpha'.$$
By letting $\alpha'$ approach $\alpha$, we complete the proof of \eqref{eq:maxDelta}. Moreover, 
$$R' = \left(\frac{2+\beta-\alpha}{\beta-\alpha}\right)R_0 \ge \left(\frac{2+\delta}{\delta}\right)R_0=R,$$ 
so $\Fix(f) \subset B_{R'}(0)$.   
\end{proof}

\begin{remark}
For general norms, it is not possible to place uniform lower bounds on $\delta$ from \eqref{eq:delta} without additional assumptions on the illuminating set. However, for some polyhedral norms the constants $\alpha, \beta$ from \eqref{eq:A} and \eqref{eq:B} satisfy $\beta-\alpha > 0$, and it is possible to give uniform bounds on $\Fix(f)$ based only on the circumcenter and circumradius of a set $\{w_1,\ldots, w_m\} \subset V$ such that $\{f(w_i)-w_i \colon 1 \le i \le m\}$ illuminates the unit ball.  
 
For the supremum norm $\|\cdot\|_\infty$ on $\R^n$, it is not hard to verify that the constants in \eqref{eq:A} and \eqref{eq:B} are $\alpha = 1$ and $\beta = 2$.  So, if $f:\R^n \rightarrow \R^n$ is nonexpansive with respect to $\|\cdot\|_\infty$, $\{f(w_i) - w_i \colon 1 \le i \le m \}$ is a set that illuminates the unit ball in $(\R^n, \|\cdot\|_\infty)$, $R_0$ is the circumradius of $\{w_i\colon 1\le i \le m \}$, and $p$ is a circumcenter of $\{w_i\colon 1\le i \le m \}$, then $\Fix(f) \subset B_{3R_0}(p)$.  

For the $l_1$ norm on $\R^n$, $\alpha = 2-2/n$ and $\beta = 2$. We leave the details as an exercise for the reader.  
\end{remark}

For inner-product spaces the following  can be shown. 
\begin{proposition} 
If $f\colon V\to V$ is a nonexpansive map on an inner-product space $V$, then 
\[
\Fix(f)\subseteq \bigcap_{w\in V} H_w,
\]
where $H_w:=\{v\in V\colon \langle v, w-f(w)\rangle \leq \langle w, w-f(w)\rangle\}$. Moreover, if $V$ is finite dimensional  and $\{f(w_i)-w_i\colon i=1,\ldots,m\}$ illuminates the unit ball of $V$, then $\Fix(f)\subseteq \bigcap_{i=1}^m H_{w_i}$ and $\bigcap_{i=1}^m H_{w_i}$ is a compact set. 
\end{proposition}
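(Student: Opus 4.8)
The plan is to prove the inclusion $\Fix(f)\subseteq\bigcap_{w\in V}H_w$ by a single inner-product computation, and then deduce compactness of $\bigcap_{i=1}^m H_{w_i}$ from the illumination hypothesis together with the fact that an inner-product norm is smooth.

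First I would fix $v\in\Fix(f)$ and an arbitrary $w\in V$. Since $f(v)=v$ and $f$ is nonexpansive, $\|f(w)-v\|\le\|w-v\|$. Writing $f(w)-v=(w-v)-(w-f(w))$ and expanding the square of the norm via the inner product gives
\[
\|w-v\|^2-2\langle w-v,\,w-f(w)\rangle+\|w-f(w)\|^2=\|f(w)-v\|^2\le\|w-v\|^2,
\]
so that $\langle w-v,\,w-f(w)\rangle\ge\tfrac12\|w-f(w)\|^2\ge 0$. Rearranging, $\langle v,\,w-f(w)\rangle\le\langle w,\,w-f(w)\rangle$, i.e.\ $v\in H_w$. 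As $w$ was arbitrary this proves the first assertion, and note that neither finite-dimensionality nor smoothness was used here.

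For the second part, $\Fix(f)\subseteq\bigcap_{i=1}^m H_{w_i}$ is immediate by restricting the intersection above. Each $H_{w_i}$ is a closed half-space (or all of $V$, in the harmless case that $w_i$ is itself a fixed point of $f$), so $\bigcap_{i=1}^m H_{w_i}$ is closed and convex, and it is nonempty because it contains $\Fix(f)$, which is nonempty by Theorem \ref{thm:main}. It remains to show boundedness. I would argue by contradiction: a nonempty closed convex set in a finite dimensional space can be unbounded only if it contains a half-line, so there would be a unit vector $d$ with $\langle d,\,w_i-f(w_i)\rangle\le 0$, equivalently $\langle d,\,f(w_i)-w_i\rangle\ge 0$, for every $i$. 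Taking $z:=d\in\partial B_1$, for each $i$ and each $\lambda>0$,
\[
\|z+\lambda(f(w_i)-w_i)\|^2=1+2\lambda\langle z,\,f(w_i)-w_i\rangle+\lambda^2\|f(w_i)-w_i\|^2\ge 1,
\]
so $f(w_i)-w_i$ does not illuminate $z$. Hence no element of $\{f(w_i)-w_i\colon i=1,\ldots,m\}$ illuminates $z$, contradicting the hypothesis that this set illuminates $B_1$. (Alternatively one can invoke Lemma \ref{lem:smoothIllum}: since an inner-product norm is smooth, illumination of $B_1$ yields $0\in\inter\conv\{f(w_i)-w_i\}$, so the outward normals $w_i-f(w_i)$ of the half-spaces $H_{w_i}$ positively span $V$, which forces the recession cone of $\bigcap_i H_{w_i}$ to be $\{0\}$.)

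I do not expect a serious obstacle: the first inclusion is essentially a one-line identity, and compactness reduces to the elementary observation that a nonzero recession direction of $\bigcap_i H_{w_i}$ would be a point of $\partial B_1$ that none of the vectors $f(w_i)-w_i$ can illuminate. The only points needing mild care are the degenerate possibility $w_i=f(w_i)$ (making $H_{w_i}=V$) and the fact that the recession-cone argument requires $\bigcap_i H_{w_i}$ to be nonempty — both dispatched by Theorem \ref{thm:main}.
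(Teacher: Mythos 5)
Your proof is correct. For the first inclusion, you expand the same inner-product identity as the paper; whether one argues directly (as you do) or by contradiction (as the paper does) is cosmetic. The boundedness argument, however, is a genuinely different route. The paper invokes Lemma \ref{lem:smoothIllum} to conclude $0\in\inter\conv\{f(w_i)-w_i\}$, passes to the polar of the polytope $\conv\{w_i-f(w_i)\}$ (a compact set containing $0$ in its interior), and then observes that $\bigcap_i H_{w_i}$ sits inside a dilate of that polar. Your main argument instead uses the recession-cone characterization of boundedness: any nonzero recession direction $d\in\partial B_1$ of the closed convex set $\bigcap_i H_{w_i}$ satisfies $\langle d,\,f(w_i)-w_i\rangle\ge 0$ for all $i$, and the inner-product computation $\|d+\lambda(f(w_i)-w_i)\|^2\ge 1$ shows that no $f(w_i)-w_i$ illuminates $d$, directly contradicting the illumination hypothesis. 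This is more elementary and self-contained than the paper's argument, since it neither invokes smoothness of the norm nor passes through polar duality — it just produces an un-illuminated boundary point. Your parenthetical alternative (smoothness $\Rightarrow$ $0\in\inter\conv\{\cdot\}$ $\Rightarrow$ trivial recession cone) is essentially the paper's idea in disguise. One small point you handled correctly and the paper leaves implicit: the degenerate case $w_i=f(w_i)$, where $H_{w_i}=V$ and $f(w_i)-w_i=0$ illuminates nothing, is harmless.
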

\begin{proof}
Suppose that there exist $x\in\Fix(f)$ and $w\in V$ such that $x\not\in H_w$. Since $f$ is nonexpansive, $\|x-f(w)\|=\|f(x)-f(w)\|\leq \|x-w\|$, so that 
\begin{align*}
0 &\geq \|x-f(w)\|^2-\|x-w\|^2\\
 &= 2\langle x, w-f(w)\rangle +\|f(w)\|^2-\|w\|^2\\
  &> 2\langle w, w-f(w)\rangle +\|f(w)\|^2-\|w\|^2\\
 &=  \|w-f(w)\|^2, 
\end{align*}
which is impossible. 

To prove the second part note that it follows from Lemma \ref{lem:smoothIllum}  that $\conv \{f(w_i)-w_i \colon i=1,\ldots,m\}$ is a compact polytope with 0 in its interior. 
This implies that $P:=\conv \{w_i- f(w_i)\colon i=1,\ldots,m\}$ is a compact polytope with 0 in its interior. The polar of P is given by $P^\circ := \{v \in V \colon \langle v,w_i-f(w_i)\rangle \leq 1\mbox{ for all } i=1,\ldots,m\}$,  which  is also  a compact polytope with 0 in interior. 
Now let \[
Q : = \{v \in V \colon \langle v,w_i-f(w_i)\rangle \leq C\text{ for all } i=1,\ldots,m\},
\] 
where $C:= \max\{1, \max_{i=1,\ldots,m}\langle w_i, w_i -f(w_i)\rangle\}$, and note that $Q$ is a compact polytope with 0 in interior. 
Clearly, 
\[
\bigcap_{i=1}^m \{v \in V \colon \langle v,w_i-f(w_i)\rangle \leq \langle w_i,w_i-f(w_i)\rangle \} \subseteq Q,
\]
which completes the proof.
\end{proof}

By applying Theorem \ref{thm:localize} to variation norm nonexpansive maps we derive the following result.  

\begin{theorem} \label{thm:eigloc}
Suppose $f\colon \R^n_{> 0}\to \R^n_{> 0}$ is an order-preserving homogeneous map and for each nonempty proper subset $J$ of $\{1, \ldots, n \}$ there is an $x^J \in \R^n_{>0}$ satisfying \eqref{eq:5.1}. If $R_0$ is the Hilbert metric circumradius of the set $S:=\{x^J\colon J \subset \{1,\ldots,n\}, J \ne \varnothing\}$, and $p$ is a circumcenter of $S$, then $\mathrm{E}(f) \subset B_{(2n-1)R_0}(p)$. 
\end{theorem}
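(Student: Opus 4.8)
The plan is to transport the problem to the $(n-1)$-dimensional variation-norm space and then apply Theorem~\ref{thm:localize}. Recall from Section~\ref{sec:PF} that $h:=\Log\circ g_f\circ\Exp$ is nonexpansive on $(V_0,\var{\cdot})$, that $x\mapsto\Log(x/x_n)$ is an isometry from $(\Sigma_0,d_H)$ onto $(V_0,\var{\cdot})$, and that $\mathrm{E}(f)$ is a union of rays whose quotient equals $\Fix(g_f)=\Exp(\Fix(h))$, with $d_H$ ray-invariant. Setting $y^J:=\Log(x^J/x^J_n)\in V_0$ for each nonempty proper $J\subset\{1,\ldots,n\}$, this isometry carries the Hilbert-metric circumradius $R_0$ of $S=\{x^J\}$ to the variation-norm circumradius of $\{y^J\}$, and a circumcenter $p$ of $S$ to $\Log(p/p_n)$. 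As established inside the proof of Theorem~\ref{thm:npfthm}, every extreme point of the unit ball $B_1$ of $(V_0,\var{\cdot})$ listed in \eqref{eq:varExtr} is illuminated by some $h(y^J)-y^J$, so by Lemma~\ref{lem:4.1} the set $\{h(y^J)-y^J\}$ illuminates $B_1$. Hence it suffices to apply Theorem~\ref{thm:localize} to $h$ with these data, which reduces the whole question to computing the constants $\alpha,\beta$ of \eqref{eq:A} and \eqref{eq:B} for the variation norm.

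I would do this computation in the quotient model $\R^n/\R\mathbf{1}$, where $\var{\cdot}$ is represented by $x\mapsto\max_ix_i-\min_jx_j$ and, by \eqref{eq:varExtr}, the extreme points of $B_1$ are the classes of the indicator vectors $\mathbf{1}_A$ with $\emptyset\ne A\subsetneq\{1,\ldots,n\}$. Normalizing a boundary point $z$ so that $\min_iz_i=0$ and $\max_iz_i=1$, a direct computation gives $\var{z-\mathbf{1}_A}=1+\max_{i\notin A}z_i-\min_{i\in A}z_i$ for every such $A$. Restricting attention to the ``threshold'' sets $A=\{i:z_i\ge t\}$ and using that the $n-1$ consecutive gaps of the sorted coordinates of $z$ are nonnegative and sum to $1$, at least one gap is $\ge\frac1{n-1}$; this produces an $A$ with $\var{z-\mathbf{1}_A}\le 1-\frac1{n-1}$, and equally spaced coordinates show this bound is attained, so $\alpha=\frac{n-2}{n-1}$.

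For $\beta$, a parallel computation gives $\var{\tfrac12z+\tfrac12\mathbf{1}_A}=\tfrac12+\tfrac12(\max_{i\in A}z_i-\min_{i\notin A}z_i)$ for normalized $z$, so the midpoint of $z$ and $\mathbf{1}_A$ lies in $\Int B_1$ exactly when $\max_{i\in A}z_i-\min_{i\notin A}z_i<1$. If that inequality holds, then $A^c$ cannot lie entirely below $A$ --- otherwise the global maximum of $z$ would be in $A$ and the global minimum in $A^c$, forcing $\max_{i\in A}z_i-\min_{i\notin A}z_i=1$ --- so $\max_{i\notin A}z_i\ge\min_{i\in A}z_i$ and hence $\var{z-\mathbf{1}_A}\ge1$; simple examples in which $A^c$ is a single coordinate with value near $0$ make $\var{z-\mathbf{1}_A}$ arbitrarily close to $1$, so $\beta=1$. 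Thus $\beta>\alpha$ and $\beta-\alpha=\frac1{n-1}$, so Theorem~\ref{thm:localize} applies to $h$ and yields $\Fix(h)\subset B_{R'}(\Log(p/p_n))$ with $R'=\frac{2+\beta-\alpha}{\beta-\alpha}R_0=(2(n-1)+1)R_0=(2n-1)R_0$; transporting this back through the isometry gives $\mathrm{E}(f)\subset B_{(2n-1)R_0}(p)$ in $(\R^n_{>0},d_H)$.

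The reduction in the first paragraph is routine once the dictionary of Section~\ref{sec:PF} is in hand; the real work is the determination of the variation-norm constants, and within that the evaluation of $\beta$, where one must rule out the pairs $(z,A)$ that would make $\var{z-\mathbf{1}_A}$ small while keeping the midpoint of $z$ and $\mathbf{1}_A$ interior to $B_1$. (For $n=1$ the hypothesis is vacuous, and for $n=2$ the variation norm is one-dimensional with $\alpha=0$ and $\beta=2$, so the same scheme gives the weaker, still valid, bound $2R_0\le 3R_0$.)
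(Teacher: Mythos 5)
Your proposal is correct and follows essentially the same route as the paper: transfer to $(V_0,\var{\cdot})$ via the $\Log$ isometry, note that $\{h(y^J)-y^J\}$ illuminates the unit ball as in the proof of Theorem~\ref{thm:npfthm}, bound the constants $\alpha$ and $\beta$ for the variation norm in the quotient model $\R^n/\R\mathbf{1}$ (your explicit identities $\var{z-\mathbf{1}_A}=1+\max_{i\notin A}z_i-\min_{i\in A}z_i$ and $\var{\tfrac12 z+\tfrac12\mathbf{1}_A}=\tfrac12+\tfrac12(\max_{i\in A}z_i-\min_{i\notin A}z_i)$ are just a cleaner packaging of the sorting/gap argument in Lemma~\ref{lem:5.4}), and apply Theorem~\ref{thm:localize}. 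The only small points worth noting: the paper only needs, and Lemma~\ref{lem:5.4} only proves, the one-sided bounds $\alpha\le 1-\frac{1}{n-1}$ and $\beta\ge 1$, so your verification that these are equalities is extra but harmless, and you are slightly more careful than the paper in writing the transported circumcenter as $\Log(p/p_n)$ rather than $\Log(p)$.
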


To prove this theorem we need the following lemma. 
\begin{lemma}\label{lem:5.4}
Let $B_1$ be the unit ball in the $(n-1)$-dimensional normed space $(V_0,\var{\cdot})$. 
\begin{enumerate}
\item[(a)] For each $w\in\partial B_1$ there exists an extreme point $v$ of $B_1$ with 
\[
\var{v-w}\leq 1-\frac{1}{n-1}.
\] 
\item[(b)] If $v \in \extr B_1$ and $w\in\partial B_1$ with $\var{v-w}<1$, then $\frac{1}{2}v+\frac{1}{2} w \in \partial B_1$.   
\end{enumerate}
\end{lemma}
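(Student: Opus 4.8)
The plan is to handle the two parts separately, using the explicit description of the extreme points $v^I_+,v^I_-$ of $B_1$. For (a) the idea is to find the longest ``gap'' among the coordinate values of $w$ and use it to partition the coordinates into a high block and a low block, one of which produces the desired extreme point. For (b) the idea is to write out $\var{v-w}$ and $\var{v+w}$ coordinatewise and read off the claim from $\var w=1$.

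For part (a): since $\var{v-w}$ is unchanged if we add a multiple of $(1,\dots,1)$ to $w$, first translate $w$ so that $\min_{1\le i\le n}w_i=0$ and $\max_{1\le i\le n}w_i=1$; this is possible because $\var w=1$. The $n$ values $w_1,\dots,w_n$ then partition $[0,1]$ into $n-1$ closed subintervals, one of which, say $[s,t]$, has length $t-s\ge\frac{1}{n-1}$ and no $w_i$ in its interior. Put $S:=\{\,i:w_i\le s\,\}$ and $L:=\{\,i:w_i\ge t\,\}$; since $s<t$ these partition $\{1,\dots,n\}$, with $S$ containing a coordinate of value $0$ and $L$ one of value $1$. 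If the index $n$ lies in $S$, then $I:=L$ is a nonempty subset of $\{1,\dots,n-1\}$, and I would take $v:=v^I_+$; a direct check shows that every entry of $v^I_+-w$ lies in $[-s,\,1-t]$, so $\var{v^I_+-w}\le 1-(t-s)\le 1-\frac{1}{n-1}$. If instead $n\in L$, then $J:=S$ is a nonempty subset of $\{1,\dots,n-1\}$, and taking $v:=v^J_-$ one finds that every entry of $v^J_--w$ lies in $[-1-s,\,-t]$, which again yields $\var{v^J_--w}\le 1-\frac{1}{n-1}$.

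For part (b): using the symmetry $v^I_-=-v^I_+$ and $\partial B_1=-\partial B_1$, reduce to $v=v^I_+$ with $\emptyset\ne I\subseteq\{1,\dots,n-1\}$. Set $a:=\min_{i\in I}w_i$, $p:=\max_{i\in I}w_i$, $q:=\min_{i\notin I}w_i$, $r:=\max_{i\notin I}w_i$ (both $I$ and its complement are nonempty). A short computation, using $\max\{X,Y\}-\min\{Z,W\}=\max\{X-Z,X-W,Y-Z,Y-W\}$, gives
\[
\var{v-w}=\max\{\,p-a,\;1-a+r,\;p-q-1,\;r-q\,\}.
\]
Since $\var{v-w}<1$, in particular $1-a+r<1$, i.e.\ $r<a$; hence every coordinate outside $I$ has strictly smaller $w$-value than every coordinate in $I$, so $\min_iw_i=q$ and $\max_iw_i=p$, and therefore $p-q=\var w=1$. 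Consequently the maximum of $v+w$ is attained on $I$ and equals $1+p$, its minimum is attained off $I$ and equals $q$, so $\var{v+w}=1+(p-q)=2$; that is, $\frac12v+\frac12w\in\partial B_1$.

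The step I expect to be the main obstacle is the bookkeeping caused by the asymmetry of the extreme points: the vectors $v^I_\pm$ are forced to vanish in coordinate $n$, so in (a) one genuinely has to branch on which side of the chosen gap the value $w_n$ falls, and in (b) one must confirm that the extrema defining $\var{v+w}$ are attained on $I$ and on its complement and are not disturbed by coordinate $n$. Each verification is routine, but keeping the case distinctions straight is where care is needed.
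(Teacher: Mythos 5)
Your proof is correct, and it is essentially the paper's proof adapted to work directly in $V_0$ rather than first passing through the isometry $L\colon \R^n/\R e \to V_0$. The paper's quotient-space viewpoint makes the extreme points symmetric in all $n$ coordinates (they become $[v]$ with $v\in\{0,1\}^n$, $v\neq 0,e$), so after sorting $1=w_1\ge\dots\ge w_n=0$ there is no case split on $w_n$ for part~(a), and part~(b) needs no $\pm$ symmetry reduction. Your version keeps the $v^I_\pm$ parametrization and correspondingly has to branch on which side of the chosen gap coordinate $n$ falls (part~(a)) and to reduce the $v^I_-$ case to $v^I_+$ by sign symmetry (part~(b)); you identify this extra bookkeeping yourself, and you carry it out correctly. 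Both approaches hinge on the same two observations: a pigeonhole gap $\ge\frac{1}{n-1}$ among the sorted coordinates for~(a), and for~(b) that $\var{v-w}<1$ forces the $w$-values on the support of $v$ to strictly dominate those off it, which together with $\var w=1$ pins down the max and min of $v+w$. The paper's route is slightly cleaner to read; yours is equally valid and perhaps more self-contained since it never leaves $V_0$.
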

\begin{proof}
Let $\R e=\mathrm{span}\{e\}$, where $e=(1,\ldots,1)$. On the quotient space $\R^n/\R e$ we also have the variation norm $\var{[x]}=\max_{1\leq i\leq n} x_i -\min_{1\leq i\leq n}x_i$ for $[x]\in \R^n/\R e$.  It is easy to verify that $L\colon \R^n/\R e\to V_0$ given by $L[x] = x-x_ne$ is a well-defined linear isometry from $(\R^n/\R e,\var{\cdot})$ onto $(V_0,\var{\cdot})$. Thus, it suffices to show the assertions for the unit ball $B_1$ in $(\R^n/\R e,\var{\cdot})$.

To prove part (a) let $[w]\in\R^n/\R e$ with $\var{[w]}=1$. We assume without loss of generality that $1=w_1\geq w_2\geq\ldots\geq w_n=0$, as we can relabel the coordinates. Thus, there exists $i<n$ such that 
\[
w_i-w_{i+1}\geq \frac{1}{n-1}.
\]
The extreme points of $B_1$ in $(\R^n/\R e,\var{\cdot})$ are given by $\{[v]\colon v\in\{0,1\}^n\}\setminus\{[e]\}$. Now let $[v]$ be the extreme point with $v_k=1$ for all $k\leq i$ and $v_i=0$ otherwise. Then 
\[
\var{[v]-[w]} =\var{v-w} = 1-w_i -(-w_{i+1}) \leq 1-\frac{1}{n-1},
\]
which proves part (a). 

To prove the second assertion  let $[v]$ be an extreme point of $B_1$. By relabeling the coordinates, we may assume that $v_k=1$ for all $k\leq i$ and 
$v_k=0$ otherwise. Now suppose that $[w]\in\partial B_1$ and $\var{[v]-[w]}<1$. We can assume that $w_k\geq 0$ for all $k$. As $\var{[v]-[w]}<1$, it follows that $0<w_1,\ldots,w_i\leq 1$ and $0\leq w_{i+1},\ldots,w_n<1$. By relabeling the coordinates of $w$ we may furthermore assume that 
$ 0<w_i\leq \ldots \leq w_1\leq 1$ and $0\leq w_{i+1}\leq \ldots\leq w_n<1$. Recall that $\var{[w]}=1$, so that $w_1 =1$ and $w_{i+1}=0$. Then $\tfrac{1}{2}(w_1+v_1) = 1$ and $\tfrac{1}{2}(w_{i+1}+v_{i+1}) = 0$, so $\var{\tfrac{1}{2}[w]+\tfrac{1}{2}[v]} \ge 1$.
\end{proof}

\begin{proof}[Proof of Theorem \ref{thm:eigloc}]
Let $\Sigma_0,g_f,V_0,h$ be as described in the proof of Theorem \ref{thm:npfthm}. As observed in the proof of Theorem \ref{thm:npfthm}, the set 
$$\{h(y^J) - y^J\colon y^J = \Log(x^J/x^J_n) \text{ where } x^J \in S \}$$ 
illuminates the unit ball in $(V_0,\var{\cdot})$. Since the map $\Log$ is an isometry from $(\Sigma_0,d_H)$ onto $(V_0,\var{\cdot})$, it suffices to prove that $\Fix(h) \subset B_{(2n-1)R_0}(\Log(p))$ in $(V_0,\var{\cdot})$. 

By Lemma \ref{lem:5.4}, the constants in \eqref{eq:A} and \eqref{eq:B} for $(V_0,\var{\cdot})$ are $\alpha = 1-\tfrac{1}{n-1}$ and $\beta = 1$. Theorem \ref{thm:localize} implies that $\Fix(h) \subset B_{(2n-1)R_0}(\Log(p))$ in $(V_0,\var{\cdot})$.  Therefore $E(f) \subset B_{(2n-1)R_0}(p)$ in $(\R^n_{>0},d_H)$.  
\end{proof}

\bibliography{DW}
\bibliographystyle{plain}

\end{document}